\def\NAT@def@citea{\def\@citea{\NAT@separator}}% Suppress spaces between citations using natbib.sty
\theoremstyle{plain}
\newtheorem{theorem}{Theorem}
\newtheorem{lemma}{Lemma}
\theoremstyle{remark}
\newtheorem{rem}{Remark}
\newtheorem{definition}{Definition}
\newtheorem{assumption}{Assumption}
\begin{document}

%\articletype{ARTICLE TEMPLATE}% Specify the article type or omit as appropriate

\title{On LSE in regression model for long-range dependent random fields on spheres.}

\author{
\name{Vo Anh\textsuperscript{a,b}, Andriy Olenko\textsuperscript{c}\thanks{CONTACT Andriy Olenko. Email: a.olenko@latrobe.edu.au} and Volodymyr Vaskovych\textsuperscript{c}}
\affil{\textsuperscript{a}School of Mathematical Sciences, Queensland University of Technology,	Brisbane, Queensland, 4001, Australia\\ \textsuperscript{b}Faculty of Science, Engineering and Technology, Swinburne University of Technology, Victoria, 3122, Australia\\ \textsuperscript{c}Department of Mathematics and Statistics,
	La Trobe University, Melbourne, 3086, Australia}
}

\maketitle

\begin{abstract}
We study the asymptotic behaviour of least squares estimators in regression models for long-range dependent random fields observed on spheres. The least squares estimator can be given as a weighted functional of long-range dependent random fields. It is known that in this scenario the limits can be non-Gaussian. We derive the limit distribution and the corresponding rate of convergence for the estimators. The results were obtained under rather general assumptions on the random fields. Simulation studies were conducted to support theoretical findings.
\end{abstract}

\begin{keywords}
Spatial regression, LSE, Long-range dependence, Non-central limit theorems,  Hermite-type distribution.
\end{keywords}
\begin{amscode}
	62H11, 62J05, 60G60.
\end{amscode}

\section{Introduction} 
Studying the asymptotic behavior of the regression parameter estimators is an important topic in Spatial Statistics, see \cite{RM}, \cite{LRM}, \cite{Miu}. A potential area of application of the results in this paper is cosmology, and more specifically, models involving the cosmic microwave background (CMB). The recent data obtained by the Wilkinson Microwave Anisotropy Probe and Planck missions, combined with data that will be obtained during the future missions, will allow to predict the behaviour of CMB at increasing distances that are far beyond the reach of any modern telescopes. Another area of application is studying the porosity of mineral resources, see, for example, \cite{doyen} about data and analysis of the porosity of hydrocarbon reservoirs. Some other potential applications of the model discussed in this article might involve 3D rupture models, soil fertility and paleomagnetism.

One of the first works that developed statistical theory of random fields was the book by \cite{Yadr}. Direct probability techniques for models involving random fields were proposed and various estimators of regression parameters were investigated. The asymptotic behaviour of some of these estimators was studied in \cite{IvLeo}. Both Gaussian and non-Gaussian limits were obtained for particular models. No results about rates of convergence were given.

In this paper we consider the regression model $\xi(x) = ah(x) + \psi(x)$, where $h(\cdot)$ is a known deterministic function, $a$ is an unknown parameter and the error $\psi(\cdot)$ is a random field. For results concerning such regression models see \cite{cohen} and references therein. Since there are numerous situations in practice when random processes and fields are non-Gaussian, we consider a particular case when $\psi(\cdot)$ is a non-linear function of a Gaussian random field. Such random fields are very common in non-Gaussian modeling since they can be analyzed using Wiener chaos expansions and in many cases offer a good data approximation, see \cite{Oli, Vio}. In this article, we consider the underlying Gaussian random fields to be long-range dependent.

Long-range dependence is a well-established phenomenon that can be observed in various fields such as hydrology, agriculture, image analysis, earth sciences, cosmology, just to name a few. For this reason, models involving  long-range dependent random fields have been an object of statistical interest for years, see \cite{frias, Douk, Iv, Wack}. 

Various earth science and cosmology applications require studying random fields defined on surfaces, for example, see \cite{Guin, leorm}. In this paper we consider random fields that are defined on expanding spheres. One can find detailed information about such spherical random fields in \cite{Mar, Yadr}.

The regression model studied in this paper was first considered in \cite{Yadr}. For this model the best linear unbiased estimator (BLUE) of the unknown parameter $a$ was derived. In \cite{YadrIl} the least squares estimator (LSE) of $a$ was obtained. Its mean-square efficiency was compared to BLUE and it was shown that, apart from some degenerate cases, LSE is less efficient than BLUE. However, for most functions $h(\cdot)$ BLUE is much more difficult to compute than LSE. For this reason using LSE is preferable to BLUE in practice. No results concerning limit distributions of the regression parameter estimators were obtained in \cite{YadrIl, Yadr} or other literature known to the authors. This article investigates the limit distribution of LSE and its rate of convergence.

For the particular case $h(\cdot) \equiv 1$, some approaches to study non-linear functionals of random fields on surfaces and rates of convergence were proposed in \cite{surfNew}. This article shows how the methodology developed in \cite{surfNew} can be extended to the general case of weighted non-linear functionals of random fields, i.e. the case of arbitrary functions $h(\cdot)$.  To reduce repetitions to a minimum, in this article we present only those parts of proofs that are new or require modifications of results in \cite{surfNew}.

The main goal of this article is to demonstrate potential applications of the developed methodology to statistical problems. We focus on the LSE for a regression model, however there are numerous other statistical problems that can be studied using weighted non-linear functionals of random fields. For example, by choosing the appropriate integrand functions, various characteristics of random fields, such as moments, can be represented by the considered functionals. In \cite{surfNew} it was shown that Minkowski functionals can be obtained by choosing indicator functions as the integrands. Another important statistical application is assigning different weights to different observation regions. In spatial statistics it is often used to control the edge effects. Also, assigning weights to observations or observation regions is an  important case of data tapering that is often used to reduce effects of less reliable observations. 

Section~\ref{sec4} presents detailed simulation studies that back up the theory and suggest new research questions.  To the best of our knowledge, simulation studies  have not been done in the available literature on convergence rates in non-central limit theorems for long-range dependent random fields. The numerical study suggests that the exponential rate of convergence obtained in all known theoretical studies might be improved. For the cases of a sphere and a cube, various observation windows and weight functions were investigated. The detailed methodology for practical simulations and studying weighted functionals of random fields observed on surfaces is provided. The corresponding R codes for one-core and parallel computing are freely available on-line and can be used by researchers in this field.  

To obtain the main results some fine properties of the Fourier transforms on surfaces are employed. Note that the Fourier transforms were more frequently studied on solid bodies than on surfaces. The assumptions for obtaining the required rate of decay are weaker in the case of solid bodies. Therefore, all results derived in this article can be analogously obtained for solid bodies.

As mentioned earlier, to study the asymptotic behaviour of LSE we investigate weighted non-linear functionals of Gaussian random fields. Non-linear functionals of random processes were considered in \cite{IvLeo, LeoSi}. In \cite{IvLeo} only the case of short-range dependent random processes and Gaussian asymptotics for weighted functionals was studied. In \cite{LeoSi} non-Gaussian asymptotics were obtained for long-range dependent random processes. The results in \cite{LeoSi} can be obtained from the methodology of this article as random processes can be considered as random fields indexed by a one-dimensional Euclidean space.
For random fields and the simplest particular case $h(\cdot) \equiv 1$  it was shown that the functionals can produce non-Gaussian limits, see  \cite{Dob,Taq1,Taq2}. The more detailed overview of this particular case was given in \cite{New}. In the general case, weighted linear functionals of the Gaussian random fields were studied in \cite{Ole}.  The assumptions on weighted functions used in this article are weaker than those used in \cite{Ole}. Moreover, we do not apply any conditions on Fourier transforms of weight functions. The known Gaussian asymptotics are a particular case when integrand functions are linear.  None of the above studies provided the rate of convergence to the obtained limits. To the best of our knowledge, this paper is  the first work that provides the rate of convergence for weighted functionals and LSE of long-range dependent random fields on hypersurfaces. 

The article is organized as follows. In Section~\ref{sec1} we recall some basic definitions of the spectral theory of random fields. Section~\ref{sec2} presents the model and states the results. Proofs of the results are provided in Section~\ref{sec3}. In Section~\ref{sec4} some simulation studies are presented to confirm theoretical findings. Conclusions and directions for future research are stated in Section~\ref{sec5}.

\section{Definitions}\label{sec1}
In this section we provide the main definitions that are used in this article.

In what follows $|\cdot|$ and $\|\cdot\|$ denote the Lebesgue measure and the Euclidean distance in $\mathbb{R}^d$, $d\geq 2$, respectively. Let $B(y, s)$ be a $d$-dimensional ball with centre $y$ and radius $s$, and let $S(r)$ be a $(d-1)$-dimensional sphere in $\mathbb{R}^{d}$ with the centre at the origin and radius $r$.  We use the symbols $C$ and $\delta$ to denote constants which are not important for our exposition. Moreover, the same symbol may be used for different constants appearing in the same proof.

Let $(\Omega ,\mathcal{F},P)$ be a complete probability space and let $T \subseteq \mathbb{R}^d$ be a set.
\begin{definition}\cite{Iv}
	A random field is a function $\eta(w, x): \Omega\times T \rightarrow \mathbb{R}$ such that $\eta(w, x)$ is a random variable for any $x\in T$. It will also be denoted as  $\eta (x),\ x\in T.$
\end{definition}

\begin{definition}\cite{Iv}
	A random field satisfying $\mathbf{E}\eta^2(x)<\infty$ is called homogeneous in the wide sense if its mathematical expectation $m(x) = \mathbf{E}\eta(x)$ and
	covariance function ${\rm{B}}(x, y) = {\mathbf{E}}\left[\eta(x)-m(x)\right]\left[\eta(y)-m(y)\right]$ are invariant with respect to the group of shifts $\tau$ in $\mathbb{R}^d$, that is, $m(x) = m(x+\tau)$, ${\rm{B}}(x, y) = {\rm{B}}(x+\tau, y+\tau)$ for any $x,y \in T$.
	 
	It means that $\mathbf{E}\eta(x) = c = const$, and the covariance
	 ${\rm{B}}(x, y)$ depends only on the difference $x - y$.
\end{definition}

\begin{definition}\cite{Iv}
	A random field satisfying $\mathbf{E}\eta^2(x)<\infty$ is called isotropic in the wide sense if its mathematical expectation $m(x)$ and covariance
		function ${\rm{B}}(x, y)$ are invariant with respect to rotations.
\end{definition}

In this article homogeneity and isotropy in the wide sense will be simply called homogeneity and isotropy. 

Let us consider a measurable mean-square continuous zero-mean homogeneous
isotropic real-valued random field $\eta (x),\ x\in \mathbb{R}^{d},$ defined
on a probability space $(\Omega ,\mathcal{F},P),$ with the covariance function 
\begin{equation*}
\text{\rm{B}}(r):=\mathrm{Cov}\left( \eta (x),\eta (y)\right)
=\int_{0}^{\infty }Y_{d}(rz)\,\mathrm{d}\Phi (z),\ x,y\in \mathbb{R}^{d},
\end{equation*}%
where $r:=\left\Vert x-y\right\Vert ,$ $\Phi (\cdot )$ is the isotropic
spectral measure, the function $Y_{d}(\cdot )$ is defined by 
\begin{equation*}
Y_{d}(z):=2^{(d-2)/2}\Gamma \left( \frac{d}{2}\right) \ J_{(d-2)/2}(z)\
z^{(2-d)/2},\quad z\geq 0,
\end{equation*}%
and $J_{(d-2)/2}(\cdot )$ is the Bessel function of the first kind of order $%
(d-2)/2.$

\begin{definition}
	The random field $\eta (x),$ $x\in \mathbb{R}^{d},$ defined above is said
	to possess an absolutely continuous spectrum if there exists a function $%
	f(\cdot )$ such that 
	\begin{equation*}
	\Phi (z)=2\pi ^{d/2}\Gamma ^{-1}\left( d/2\right) \int_{0}^{z}u^{d-1}f(u)\,%
	\mathrm{d}u,\quad z\geq 0,\quad u^{d-1}f(u)\in L_{1}(\mathbb{R}_{+}).
	\end{equation*}%
	The function $f(\cdot )$ is called the isotropic spectral density function
	of the field~$\eta (x).$ The field~$\eta (x)$ with an absolutely continuous spectrum has the following representation 
	\begin{equation*}
	\eta (x)=\int_{\mathbb{R}^d}e^{i<\lambda ,x>}\sqrt{f\left( \left\| \lambda
		\right\| \right) }W(\mathrm{d}\lambda ),
	\end{equation*}
	where $W(\cdot )$ is the complex Gaussian white noise random measure on $%
	\mathbb{R}^d.$
\end{definition}

Let $H_{k}(u)$, $k\geq 0$, $u\in \mathbb{R}$, be the Hermite polynomials,
see \cite{pec}. These polynomials form a complete orthogonal system in the Hilbert space 
\begin{equation*}
{L}_{2}(\mathbb{R},\phi (w)\,dw)=\left\{ G:\int_{\mathbb{R}}G^{2}(w)\phi
(w)\,\mathrm{d}w<\infty \right\} ,\quad \phi (w):=\frac{1}{\sqrt{2\pi }}e^{-%
	\frac{w^{2}}{2}}.
\end{equation*}

An arbitrary function $G\in {L}_{2}(\mathbb{R},\phi(w )\, dw)$ admits the
mean-square convergent expansion 
\begin{equation*}  \label{herm}
G(w)=\sum_{j=0}^{\infty }\frac{C_{j}H_{j}(w) }{j !}, \qquad C_{j }:=\int_{%
	\mathbb{R}}G(w)H_{j }(w)\phi ( w )\,\mathrm{d}w.
\end{equation*}

By Parseval's identity 
\begin{equation*}  \label{par}
\sum_{j=0}^\infty\frac{C_{j}^{2}}{j !} =\int_{\mathbb{R}}G^2(w) \phi ( w )\,%
\mathrm{d}w.
\end{equation*}

It will be shown that studying asymptotics of non-linear functionals defined by a function $G\in {L}_{2}(\mathbb{R},\phi(w )\, dw)$ can be done by investigating leading terms in their expansions. The following definitions and remarks provide basic notations and tools to formulate and prove these asymptotic results.

\begin{definition} \rm{\cite{Taq1}} Let $G\in {L}_{2}(\mathbb{R},\phi (w)\,dw)$ and
	assume there exists an integer $\kappa \in \mathbb{N}$ such that $C_{j}=0$, for all $%
	0\leq j\leq \kappa -1,$ but $C_{\kappa }\neq 0.$ Then $\kappa $ is called
	the Hermite rank of $G(\cdot )$ and is denoted by $H\mbox{rank}\,G.$
\end{definition}

The following remark gives a well known property of Hermite polynomials of Gaussian vectors, see~\cite{Iv}.

\begin{rem}{\label{rem1}}
	If $(\xi _{1},\ldots ,\xi _{2p})$ is a $2p$-dimensional
	zero-mean Gaussian vector with 
	\begin{equation*}
	\mathbf{E}\xi _{j}\xi _{k}=%
	\begin{cases}
	1, & \mbox{if }k=j, \\ 
	r_{j}, & \mbox{if }k=j+p\ \mbox{and }1\leq j\leq p, \\ 
	0, & \mbox{otherwise,}%
	\end{cases}%
	\end{equation*}%
	then 
	\begin{equation}\label{hvar}
	\mathbf{E}\ \prod_{j=1}^{p}H_{k_{j}}(\xi _{j})H_{m_{j}}(\xi
	_{j+p})=\prod_{j=1}^{p}\delta _{k_{j}}^{m_{j}}\ k_{j}!\ r_{j}^{k_{j}},
	\end{equation}
	where   $\delta _{k}^{m}=%
	\begin{cases}
		1, & \mbox{if }k=m, \\ 
		0, & \mbox{if }k\neq m,
	\end{cases}$ \ is the Dirac delta function.
\end{rem} 

Let $\Delta$ be a bounded set in $\mathbb{R}^d,\, d\geq 2,$ with a boundary $\partial\Delta$. Let $\Delta(r)$, $r > 0,$ be the homothetic image of the set $\Delta$ with the centre of homothety at the origin and the
coefficient $r > 0$, that is $|\Delta(r)| = r^d|\Delta|$. Let $\sigma(\cdot)$ be the $d - 1$-dimensional Lebesgue measure on  $\partial\Delta(r)$. Let $U$ and $V$ be two independent and uniformly distributed random vectors on the hypersurface $
\partial\Delta (r)$. We denote by $\psi _{\Delta (r)}(\rho
),$ $\rho \geq 0,$ the probability density function of the distance $\left\| U-V\right\| $ between $U$ and $V.$   Note that $\psi _{\Delta (r)}(\rho
)=0$ if $\rho > diam\left\{ \partial\Delta (r)
\right\}.$
Using the above notations, we obtain the  representation
\[
\int\limits_{\partial\Delta (r)}\int\limits_{\partial\Delta (r)}G(\left\| x-y\right\| )\sigma({\rm d}x)\,
\sigma({\rm d}y)=
\left| \partial\Delta \right| ^{2}r^{2d-2}\mathrm{E}\ G(\left\| U-V\right\| )=
\]
\begin{equation}\label{dint}
=\left| \partial\Delta \right| ^{2}r^{2d-2}\int_{0}^{diam\left\{ \partial\Delta
	(r)\right\} }G(\rho )\ \psi _{\Delta (r)}(\rho )d\rho.
\end{equation}

For various hypersurfaces $\partial\Delta$ explicit expressions for $\psi _{\Delta (r)}(\cdot)$ are presented in \cite{Iv}. In the case of spheres it takes the following form. 

\begin{rem}
	If $\partial\Delta(r) = S(r)$, then 	
	\[\psi _{\Delta (r)}(\rho) = \frac{1}{\sqrt{\pi}}\Gamma\left(\frac{d}{2}\right)\Gamma^{-1}\left(\frac{d-1}{2}\right)r^{1-d}\rho^{d-2}\left(1 - \frac{\rho ^2}{4u^2}\right)^{\frac{d-3}{2}}, \quad 0 < \rho < 2r.\]
\end{rem}

\begin{definition}
	\rm{\cite{bin}} A measurable function $L:(0,\infty )\rightarrow
	(0,\infty )$ is said to be slowly varying at infinity if for all $t>0$%
	\begin{equation*}
	\lim\limits_{\lambda \rightarrow \infty }\frac{L(\lambda t)}{L(\lambda )}=1.
	\end{equation*}
\end{definition}

If $L(\cdot )$ varies slowly, then $r^{a}L(r)\rightarrow \infty ,$ $%
r^{-a}L(r)\rightarrow 0$ for an arbitrary $a>0$ when $r\rightarrow \infty ,$
see Proposition~1.3.6 in \cite{bin}.

\begin{definition}{\cite{bin}}
	A measurable function $g:(0,\infty )\rightarrow (0,\infty )$ is
	said to be regularly varying at infinity, denoted $g(\cdot )\in R_{\tau }$,
	if there exists $\tau $ such that, for all $t>0,$ it holds that
	\begin{equation*}
	\lim\limits_{\lambda \rightarrow \infty }\frac{g(\lambda t)}{g(\lambda )}%
	=t^{\tau }.
	\end{equation*}
\end{definition}

\begin{definition}\label{sr2}{\cite{bin}}
	Let	$g:(0,\infty )\rightarrow (0,\infty )$ be a measurable function and $g(x)\rightarrow 0$
	as $x\rightarrow \infty$. A slowly varying function $L(\cdot )$ is said to be slowly varying with
	remainder of type 2, or that it belongs to the class SR2, if 
	\begin{equation*}
	\forall \lambda >1:\quad \frac{L(\lambda x)}{L(x)}-1\sim k(\lambda
	)g(x),\quad x\rightarrow \infty ,
	\end{equation*}%
	for some function $k(\cdot )$.
	
	If there exists $\lambda $ such that $k(\lambda )\neq 0$ and $k(\lambda \mu
	)\neq k(\mu )$ for all $\mu $, then $g(\cdot )\in R_{\tau }$ for some $\tau
	\leq 0$ and $k(\lambda )=ch_{\tau }(\lambda )$, where 
	\begin{equation}
	h_{\tau }(\lambda )=%
	\begin{cases}
	\ln (\lambda ),\quad if\;\tau =0, \\ 
	\frac{\lambda ^{\tau }-1}{\tau },\quad \,if\;\tau \neq 0.%
	\end{cases}
	\label{htau}
	\end{equation}
\end{definition}

\begin{rem}\label{log}
	An example of a function that satisfies Definition~\ref{sr2} for $\tau = 0$ is $L(x) = \ln(x).$ Indeed, 
	\[ 
	\frac{L(\lambda x)}{L(x)} - 1 = \frac{\ln(\lambda) + \ln(x)}{\ln(x)} -1 = \ln(\lambda)\cdot\frac{1}{\ln(x)}.
	\]
\end{rem}

\begin{definition}
	Let $Y_1$ and $Y_2$ be arbitrary random variables. The uniform (Kolmogorov)
	metric for the distributions of $Y_1$ and $Y_2$ is defined by 
	\begin{equation*}
	{\rho}\left( Y_1,Y_2\right) =\underset{z\in \mathbb{R}}{\sup }\left| P\left(
	Y_1\leq z\right) -P\left( Y_2\leq z\right) \right| .
	\end{equation*}
\end{definition}

The next result follows from Lemma~1.8 in \cite{pet}.

\begin{lemma}
	\label{lem1} If $X,Y$ and $Z$ are arbitrary random variables, then for any $%
	\varepsilon >0$ 
	\begin{equation*}
	\rho \left( X+Y,Z\right) \leq {\rho }(X,Z)+\rho \left( Z+\varepsilon
	,Z\right) +P\left( \left\vert Y\right\vert \geq \varepsilon \right) .
	\end{equation*}
\end{lemma}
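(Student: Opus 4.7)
The plan is a standard triangle-type decomposition tailored to the Kolmogorov metric. Conceptually, the inequality says that a small additive perturbation $Y$ of $X$ can be absorbed into $\rho(\cdot,\cdot)$ at the cost of (i) a tail probability controlling how often $|Y|$ is large, and (ii) a one-sided modulus of continuity of the distribution of $Z$ evaluated at scale $\varepsilon$, which is exactly $\rho(Z+\varepsilon,Z)$.

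First, I would fix $z\in\mathbb{R}$ and split according to $\{|Y|<\varepsilon\}$ and its complement. On $\{|Y|<\varepsilon\}$ the event $\{X+Y\leq z\}$ implies $X\leq z-Y<z+\varepsilon$, giving
\begin{equation*}
P(X+Y\leq z)\leq P(X\leq z+\varepsilon)+P(|Y|\geq \varepsilon).
\end{equation*}
Then I would apply the definition of $\rho(X,Z)$ at the point $z+\varepsilon$ to replace $X$ by $Z$, obtaining $P(X\leq z+\varepsilon)\leq P(Z\leq z+\varepsilon)+\rho(X,Z)$, and finally observe that
\begin{equation*}
|P(Z\leq z+\varepsilon)-P(Z\leq z)|=|P(Z+\varepsilon\leq z+\varepsilon)-P(Z\leq z+\varepsilon)|\leq \rho(Z+\varepsilon,Z),
\end{equation*}
so chaining these three bounds yields
\begin{equation*}
P(X+Y\leq z)-P(Z\leq z)\leq \rho(X,Z)+\rho(Z+\varepsilon,Z)+P(|Y|\geq\varepsilon).
\end{equation*}

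For the reverse direction I would use the inclusion $\{X\leq z-\varepsilon\}\cap\{|Y|<\varepsilon\}\subseteq\{X+Y\leq z\}$ to get $P(X+Y\leq z)\geq P(X\leq z-\varepsilon)-P(|Y|\geq\varepsilon)$, and then apply the same two-step comparison (first replace $X$ by $Z$ via $\rho(X,Z)$, then remove the $-\varepsilon$ shift via $\rho(Z+\varepsilon,Z)$, using the symmetry of the Kolmogorov metric). Taking the supremum over $z$ of the resulting two-sided bound produces the claimed inequality.

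There is no real obstacle here: the argument is elementary bookkeeping of three error terms. The only delicate point is to make sure the $\varepsilon$-shift appears in the form $\rho(Z+\varepsilon,Z)$ as stated, rather than $\rho(Z-\varepsilon,Z)$; this is where symmetry of $\rho$ in its two arguments is invoked, and it is also the reason both directions of the split (via $z+\varepsilon$ and via $z-\varepsilon$) yield the same right-hand side.
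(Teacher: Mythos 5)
Your argument is correct: the split on $\{|Y|<\varepsilon\}$ gives $P(X+Y\leq z)\leq P(X\leq z+\varepsilon)+P(|Y|\geq\varepsilon)$ and $P(X+Y\leq z)\geq P(X\leq z-\varepsilon)-P(|Y|\geq\varepsilon)$, and since $P(Z\leq z\mp\varepsilon)=P(Z\pm\varepsilon\leq z)$ both shifts are absorbed by the single quantity $\rho(Z+\varepsilon,Z)$, so taking the supremum over $z$ yields the stated bound. The paper itself does not prove this lemma at all; it simply records it as a consequence of Lemma~1.8 in Petrov's book, so your write-up is a self-contained version of the standard smoothing-inequality argument that underlies that citation, and it matches what the cited result does. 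No gap.
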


\section{Model and results}\label{sec2}

Let us consider the random field 
\[\xi(x) = a_rh(x) + \psi(x), \qquad x\in S(r),\quad r > 0,\]
where $h(\cdot)$ is a known deterministic function, $a_r$ is an unknown parameter and $\psi(\cdot)$ is a homogeneous isotropic mean-square random field with mean 0.

Suppose that $h(x): = h_{rad}(\|x\|)h_{sp}\left(\frac{x}{\|x\|}\right),$ where $h_{rad}(\cdot)$ is a radial function such that $h_{rad}(r) \neq 0$ for all $r > 0$ and $h_{sp}(\cdot)\not\equiv 0$ is a smooth bounded function defined on the unit sphere $S(1)$.

In this case, the least squares estimator (LSE) of the coefficient $a_r$ has the explicit form, see \cite{Yadr},
\[
\hat{a}_r = \frac{\displaystyle\int\limits_{S(r)}\xi(x)h(x)\sigma({\rm d}x)}{\displaystyle\int\limits_{S(r)}h^2(x)\sigma({\rm d}x)} = \frac{\displaystyle\int\limits_{S(r)}\xi(x)h_{rad}(\|x\|)h_{sp}\left(\frac{x}{\|x\|}\right)\sigma({\rm d}x)}{\displaystyle\int\limits_{S(r)}h^2_{rad}(\|x\|)h_{sp}^2\left(\frac{x}{\|x\|}\right)\sigma({\rm d}x)}\]
\[ = \frac{\displaystyle\int\limits_{S(r)}\xi(x)h_{sp}\left(\frac{x}{\|x\|}\right)\sigma({\rm d}x)}{h_{rad}(r)\displaystyle\int\limits_{S(r)}h_{sp}^2\left(\frac{x}{\|x\|}\right)\sigma({\rm d}x)}.
\]

Let $\psi(x) = G(\eta(x)),$ where $G\in {L}_{2}(\mathbb{R},\phi(w )\, dw)$ and $\eta(x)$ is a random field that satisfies the following assumptions.

\begin{assumption}
	\label{ass1} Let $\eta (x),$ $x\in \mathbb{R}^{d}$, be a homogeneous
	isotropic Gaussian random field with $\mathbf{E}\eta (x)=0$ and a covariance
	function $B(x)$ such that
	\begin{equation*}
	B(0)=1,\quad B(x)=\mathbf{E}\eta (0) \eta (x)= \left\Vert x\right\Vert
	^{-\alpha }L_{0}(\left\Vert x\right\Vert ),
	\end{equation*}
	where $L_{0}(\left\Vert \cdot\right\Vert )$ is a function slowly varying at
	infinity.
\end{assumption}

This assumption is a classical way to introduce hyperbolically decaying dependencies between observations, see \cite{Dob, Douk, Taq2, Iv} and references therein. Random fields satisfying this assumption for $\alpha > d$ are weakly-dependent. If $\alpha\in(0, d)$ then the long-range dependence case is considered.

\begin{assumption}
	\label{ass2} The random field $\eta(x),$ $x \in \mathbb{R}^d,$ has the
	spectral density
	\[
	f(\left\| \lambda \right\| )= c_2(d,\alpha )\left\| \lambda \right\|
	^{\alpha -d}L\left( \frac 1{\left\| \lambda \right\| }\right),
	\]
	where $c_2(d,\alpha ):=\frac{\Gamma \left( \frac{d-\alpha }2\right) }{2^\alpha \pi
		^{d/2}\Gamma \left( \frac \alpha 2\right) },$ and $L(\left\Vert \cdot\right\Vert )$ is a locally bounded function slowly varying at infinity which satisfies for sufficiently large $r$ the
	condition 
	\begin{equation}  \label{gr}
	\left|1-\frac{L(tr)}{L(r)}\right|\le C\,g(r)h_{\tau}(t),\ t\ge 1,
	\end{equation}
	where $g(\cdot) \in R_{\tau} , \tau \le 0$, such that $g(x) \to 0, \ x \to
	\infty$, and $h_{\tau}(t)$ is defined by~\rm{(\ref{htau}).}
\end{assumption}

Long-range dependence is usually introduced by requiring the hyperbolic decay of covariance functions or power-type singularity of the corresponding spectral densities. For many real data these two definitions are operationally equivalent, see also Tauberian-Abelian theorems in \cite{leoole}. However, there are cases when Assumption~\ref{ass2} does not follow from Assumption~\ref{ass1}, see \cite{leoole}. To make all following results rigorous, we require that the both assumptions hold.

Examples of popular classes of random fields that satisfy Assumption~\ref{ass1} and~\ref{ass2} simultaneously are Bessel, Cauchy, and Linnik random fields. 

\begin{rem}\label{eql}
	By Tauberian and Abelian theorems, see \cite{leoole}, for $L_0(\cdot)$ and $L(\cdot)$ given in  Assumptions~\ref{ass1} and \ref{ass2} it holds  $L_0(r) \sim L(r),$ $r\to +\infty.$  
\end{rem}

In \cite{New} the following property of slowly varying functions satisfying conditions (\ref{gr}) was proven.

\begin{rem}
	~\label{rem0} If $L(\cdot)$ satisfies {\rm{(\ref{gr})}}, then for any $%
	k\in \mathbb{N}$, $\delta >0$, and sufficiently large~$r$ 
	\begin{equation*}
	\left\vert 1-\frac{L^{k/2}(tr)}{L^{k/2}(r)}\right\vert \leq C\,g(r)h_{\tau
	}(t)t^{\delta },\ t\geq 1.
	\end{equation*}
\end{rem}

Since $\psi(x) = G(\eta(x)),$ the LSE has the form 

\[
\hat{a}_r = a_r + \frac{\displaystyle\int\limits_{S(r)}G(\eta(x))h_{sp}\left(\frac{x}{\|x\|}\right)\sigma({\rm d}x)}{h_{rad}(r)\displaystyle\int\limits_{S(r)}h_{sp}^2\left(\frac{x}{\|x\|}\right)\sigma({\rm d}x)}.
\]

Let $H\mathrm{rank}\,G=\kappa$. Our main object of interest is the random variable 
\[
X_{r, G}: = c_h(r)c_r(d,\alpha)(\hat{a}_r - a_r),
\]
where 
\[c_h(r) := h^{-1}_{rad}(r)\int\limits_{S(r)}h^2(x)\sigma({\rm d}x) \text{ and } c_r(d,\alpha) = \frac{\kappa!c_2^{-\kappa/2}(d,\alpha )}{C_\kappa r^{d-1-\frac{\kappa\alpha}{2}}L^{%
		\frac{\kappa}{2}}(r)}.\]

It is straightforward to see that 
\[
X_{r, G} = c_r(d, \alpha)\int\limits_{S(r)}G(\eta(x))h_{sp}\left(\frac{x}{\|x\|}\right)\sigma({\rm d}x)\]
\[ = c_r(d, \alpha)\frac{C_\kappa}{\kappa!}
\int\limits_{S(r)}H_\kappa (\eta (x))h_{sp}\left(\frac{x}{\|x\|}\right)\,\sigma(\mathrm{d}x)
\]
\begin{equation}\label{intf}
+ c_r(d, \alpha)\sum_{j\geq \kappa + 1}\frac{C_j}{j!}
\int\limits_{S(r)}H_j (\eta (x))h_{sp}\left(\frac{x}{\|x\|}\right)\,\sigma(\mathrm{d}x) =: X_{r,\kappa}+V_r.
\end{equation}

\begin{theorem}\label{th1}
	Suppose that $H\mathrm{rank}\,G=\kappa \in \mathbb{N}$ and $\eta (x),$ $x\in \mathbb{R}^{d},$
	satisfies Assumption~{\rm\ref{ass1}} for $\alpha\in(0, (d-1)/\kappa)$. If at least one of the following random variables
	\begin{equation*}
	\frac{X_{r, G}}{\sqrt{\mathbf{Var}\text{ }X_{r, G}}},\quad \frac{X_{r, G}}{\sqrt{%
			\mathbf{Var}\ X_{r,\kappa }}}\quad \mbox{and}\quad \frac{X_{r,\kappa }}{%
		\sqrt{\mathbf{Var}\ X_{r,\kappa }}},
	\end{equation*}%
	has a limit distribution, then the limit distributions of the other random variables also exist and
	they coincide when $r\rightarrow \infty .$
\end{theorem}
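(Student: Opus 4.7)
The plan is to reduce everything to the behaviour of the leading Hermite term $X_{r,\kappa}$ by showing that the remainder $V_r$ in~(\ref{intf}) is asymptotically negligible after normalisation. Concretely, I would establish $\mathbf{Var}\,V_r/\mathbf{Var}\,X_{r,\kappa}\to 0$ as $r\to\infty$. Since Hermite polynomials of different orders are orthogonal for the standard Gaussian measure, Remark~\ref{rem1} also yields $\mathbf{Cov}(X_{r,\kappa},V_r)=0$, and consequently $\mathbf{Var}\,X_{r,G}/\mathbf{Var}\,X_{r,\kappa}\to 1$. Because $\mathbf{E}\,V_r=0$, Chebyshev's inequality will convert this variance bound into $V_r/\sqrt{\mathbf{Var}\,X_{r,\kappa}}\to 0$ in probability, and a single application of Lemma~\ref{lem1} (or equivalently Slutsky's theorem) will transfer a limit distribution between any two of the three normalised ratios in either direction.

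For the variance estimate, (\ref{hvar}) gives
\begin{equation*}
\mathbf{Var}\,V_r = c_r^2(d,\alpha)\sum_{j\ge\kappa+1}\frac{C_j^2}{j!}\int_{S(r)}\int_{S(r)} B^j(\|x-y\|)\,h_{sp}\!\left(\tfrac{x}{\|x\|}\right)h_{sp}\!\left(\tfrac{y}{\|y\|}\right)\sigma(\mathrm{d}x)\sigma(\mathrm{d}y).
\end{equation*}
Because $|B(x)|\le B(0)=1$ by Cauchy--Schwarz, $\sum_{j\ge 0}C_j^2/j!<\infty$ by Parseval, and $h_{sp}$ is bounded, the pointwise inequality $|B|^j\le|B|^{\kappa+1}$ for $j\ge\kappa+1$ yields
\begin{equation*}
\mathbf{Var}\,V_r \le C\, c_r^2(d,\alpha)\int_{S(r)}\int_{S(r)}|B(\|x-y\|)|^{\kappa+1}\,\sigma(\mathrm{d}x)\sigma(\mathrm{d}y).
\end{equation*}
Using (\ref{dint}) together with the explicit spherical density $\psi_{\Delta(r)}$ of the preceding remark reduces the right-hand side to a one-dimensional integral in $\rho$. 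I would split its range at a fixed $R_0$ beyond which Assumption~\ref{ass1} delivers $|B(\rho)|\le C\rho^{-\alpha}L_0(\rho)$: the near-diagonal piece $\rho\le R_0$ contributes at most $C r^{d-1}$ via $|B|\le 1$, while the far-diagonal piece is dominated by $C r^{d-1}\int_{R_0}^{2r}\rho^{d-2-(\kappa+1)\alpha}L_0^{\kappa+1}(\rho)\,\mathrm{d}\rho$, whose order depends on whether $(\kappa+1)\alpha$ lies below or above $d-1$.

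A parallel rescaling $x=ru$, $y=rv$ with $u,v\in S(1)$, together with $B^\kappa(\rho)\sim\rho^{-\kappa\alpha}L_0^\kappa(\rho)$, shows that $\mathbf{Var}\,X_{r,\kappa}$ tends as $r\to\infty$ to a finite positive constant proportional to $\int_{S(1)}\int_{S(1)}\|u-v\|^{-\kappa\alpha}h_{sp}(u)h_{sp}(v)\sigma(\mathrm{d}u)\sigma(\mathrm{d}v)$, which is finite by a polar-coordinate count on the sphere exactly because $\kappa\alpha<d-1$. Plugging in $c_r^2(d,\alpha)\sim r^{-(2d-2-\kappa\alpha)}L^{-\kappa}(r)$ and using Remark~\ref{eql} to interchange $L$ and $L_0$ asymptotically, in the subcase $(\kappa+1)\alpha<d-1$ the ratio $\mathbf{Var}\,V_r/\mathbf{Var}\,X_{r,\kappa}$ is $O(r^{-\alpha}L(r))$, and in the subcase $(\kappa+1)\alpha\ge d-1$ it is $O(r^{-(d-1)+\kappa\alpha}L^{-\kappa}(r))$, both tending to zero because $\alpha>0$, $\kappa\alpha<d-1$, and any positive power of $r$ dominates a slowly varying factor. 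With this estimate in hand, Lemma~\ref{lem1} applied to $X=X_{r,\kappa}/\sqrt{\mathbf{Var}\,X_{r,\kappa}}$, $Y=V_r/\sqrt{\mathbf{Var}\,X_{r,\kappa}}$ and $Z$ the putative limit, letting $\varepsilon\to 0$ at continuity points of $Z$, delivers the equivalence of the second and third ratios in the theorem; the first and second ratios then differ only by the factor $\sqrt{\mathbf{Var}\,X_{r,\kappa}/\mathbf{Var}\,X_{r,G}}\to 1$, which Slutsky handles. I expect the main obstacle to be the two-case analysis of $(\kappa+1)\alpha$ versus $d-1$ when bounding $\int\int|B|^{\kappa+1}$ over the sphere, together with the bookkeeping of slowly varying factors so that in each case the algebraic exponent still dominates and produces decay to zero.
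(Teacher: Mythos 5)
Your proposal follows essentially the same route as the paper's proof: the orthogonal decomposition $X_{r,G}=X_{r,\kappa}+V_r$ from (\ref{intf}) and Remark~\ref{rem1}, the bound $\mathbf{Var}\,V_r$ via $|B|^j\le|B|^{\kappa+1}$ together with Karamata-type estimates showing $\mathbf{Var}\,V_r/\mathbf{Var}\,X_{r,\kappa}\to 0$, and a Slutsky/Lemma~\ref{lem1} transfer of the limit (the paper phrases this last step as mean-square convergence of the normalised differences, which is equivalent in effect). The one place the paper works harder than your sketch is the asymptotics of $\mathbf{Var}\,X_{r,\kappa}$: since $h_{sp}$ may change sign, the paper shifts it by a constant $c_h>\max|h_{sp}|$ to rewrite the double integral in terms of probability densities before invoking the slowly-varying-integral theorem (Theorem~2.7 in [sen]), a domination argument that your rescaling step asserts (including positivity of the limiting constant) but does not spell out.
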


By Theorem~\ref{th1} to study limit distributions one can use $X_{r,\kappa }$ instead of $X_{r, G}$. 

In Theorem~\ref{th2} below, we show that the limit distribution of $X_{r,\kappa }$ is a Hermite-type random variable, which can be represented by a Wiener-It\^{o} integral. Let
\begin{equation*}
K(x):=\int\limits_{S(1) }e^{i<x,u>} h_{sp}(u)\sigma(\mathrm{d}u),\quad
x\in\mathbb{R}^{d},
\end{equation*}
be the Fourier transform of the function $h_{sp}(\cdot)$ over the $(d-1)$-dimensional sphere with radius 1. Using the decay properties of the Fourier transform one can prove the following.

\begin{lemma}\label{finint}
	If $\tau_1,...,\tau_\kappa,$ $\kappa\ge 1,$ are such positive constants, that $\sum_{i=1}^\kappa \tau_i <d-1,$  then
	\begin{equation}\label{finv}
	\int\limits_{\mathbb{R}^{d\kappa}}|K(\lambda _1+\cdots
	+\lambda _\kappa)|^2 \frac{\mathrm{d}\lambda _1\ldots \mathrm{d}\lambda _\kappa}{\left\| \lambda
		_1\right\| ^{d-\tau_1}\cdots \left\| \lambda _\kappa\right\| ^{d-\tau_\kappa}}<\infty .
	\end{equation}
\end{lemma}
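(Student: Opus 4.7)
The strategy combines the classical decay of the Fourier transform of a smooth density on a sphere of non-vanishing Gaussian curvature with an iterated Riesz composition formula. Throughout, write $T := \sum_{i=1}^\kappa \tau_i$, so by hypothesis $0 < T < d-1$. The first step I would take is to record the pointwise bound on the kernel: since $h_{sp}$ is smooth and bounded on $S(1)$ and the unit sphere has non-vanishing Gaussian curvature, the stationary-phase method for oscillatory integrals over smooth hypersurfaces yields
\begin{equation*}
|K(x)| \leq C\min\bigl(1,\,\|x\|^{-(d-1)/2}\bigr),\qquad x\in\mathbb{R}^d,
\end{equation*}
and hence $|K(x)|^2 \leq C\min(1,\|x\|^{-(d-1)})$. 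When $h_{sp}\equiv 1$ this reduces to the classical large-argument asymptotic for the Bessel function $J_{(d-2)/2}$.

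Next I would change variables by setting $\mu := \lambda_1+\cdots+\lambda_\kappa$ and retaining $\lambda_2,\ldots,\lambda_\kappa$ as free variables, so that $\lambda_1 = \mu - \sum_{j=2}^\kappa \lambda_j$. By Fubini, the integral in (\ref{finv}) becomes $\int_{\mathbb{R}^d} |K(\mu)|^2 F(\mu)\,d\mu$, where
\begin{equation*}
F(\mu) := \int_{\mathbb{R}^{d(\kappa-1)}} \frac{d\lambda_2\cdots d\lambda_\kappa}{\bigl\|\mu - \lambda_2 - \cdots - \lambda_\kappa\bigr\|^{\,d-\tau_1}\,\prod_{j=2}^\kappa \|\lambda_j\|^{\,d-\tau_j}}.
\end{equation*}
To evaluate $F(\mu)$ I would iterate the Riesz composition formula
\begin{equation*}
\int_{\mathbb{R}^d} \frac{dz}{\|y-z\|^{d-\alpha}\|z\|^{d-\beta}} = C(d,\alpha,\beta)\,\|y\|^{-(d-\alpha-\beta)},\qquad \alpha,\beta>0,\ \alpha+\beta<d,
\end{equation*}
integrating out $\lambda_\kappa$, then $\lambda_{\kappa-1}$, and so on down to $\lambda_2$. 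At the $m$-th step the cumulative exponent equals $\tau_1+\tau_\kappa+\cdots+\tau_{\kappa-m+1}$, which is strictly positive and bounded above by $T<d-1<d$, so the admissibility conditions hold at every step. After $\kappa-1$ iterations this gives $F(\mu) = C\|\mu\|^{T-d}$.

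It then remains to verify that $\int_{\mathbb{R}^d} |K(\mu)|^2 \|\mu\|^{T-d}\,d\mu$ is finite, which I would do by splitting $\mathbb{R}^d$ into $B(0,1)$ and its complement. On $B(0,1)$, the uniform bound $|K(\mu)|^2 \leq C$ together with $T>0$ make $\|\mu\|^{T-d}$ integrable at the origin. On the complement, the decay $|K(\mu)|^2 \leq C\|\mu\|^{-(d-1)}$ reduces the integrand to order $\|\mu\|^{T-2d+1}$, which is integrable at infinity precisely because $T<d-1$. The main technical hurdle is the iterated Riesz composition, which requires tracking cumulative sums of the $\tau_i$ across $\kappa-1$ steps; the conditions $\tau_i>0$ and $T<d-1$ make this bookkeeping immediate, but the step count grows with $\kappa$. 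The secondary point is the $\|x\|^{-(d-1)/2}$ decay of $K$, which is standard once smoothness of $h_{sp}$ and non-vanishing Gaussian curvature of the sphere are invoked.
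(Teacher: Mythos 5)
Your proposal is correct. The pointwise bound $|K(x)|\leq C\min\bigl(1,\|x\|^{-(d-1)/2}\bigr)$ is exactly the ingredient the paper takes from Stein's stationary-phase theorem, and your final near/far split of $\int_{\mathbb{R}^d}|K(\mu)|^2\|\mu\|^{T-d}\,\mathrm{d}\mu$ using $0<T<d-1$ mirrors the paper's treatment of the $\kappa=1$ case. Where you differ is in the reduction of the multiple integral: you integrate out $\lambda_2,\ldots,\lambda_\kappa$ first via the exact Riesz composition formula, obtaining $F(\mu)=C\|\mu\|^{T-d}$ in closed form, whereas the paper keeps $K$ inside and reduces recursively, merging two variables at a time through the change of variables $u=\lambda_{\kappa-1}+\lambda_\kappa$, $\tilde\lambda_{\kappa-1}=\lambda_{\kappa-1}/\|u\|$ (citing the earlier paper for the details), with the decay of $K$ entering only through the spherical average bound $\int_{S(1)}|K(\omega r)|^2\,\mathrm{d}\omega\leq Cr^{1-d}$. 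Your route is cleaner and fully self-contained, since the composition of power kernels is classical and the bookkeeping of cumulative exponents is immediate from $\tau_i>0$ and $T<d$; the price is that you lean on the pointwise stationary-phase decay, which is specific to surfaces of non-vanishing Gaussian curvature (harmless here, as the surface is a sphere), while the paper's formulation through the average decay is precisely the form that carries over to more general hypersurfaces, which is why the authors phrase Lemma~\ref{avdw} that way.
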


Let 
\[
X_\kappa :={\int\limits_{\mathbb{R}^{d\kappa}}}^{
	\prime}K(\lambda _1+\cdots +\lambda _\kappa)\frac{W(\mathrm{d}%
	\lambda _1)\ldots W(\mathrm{d}\lambda _\kappa)}{\left\| \lambda _1\right\|
	^{(d-\alpha )/2}\ldots \left\| \lambda _\kappa\right\| ^{(d-\alpha )/2}},
\]
where ${\int\limits_{\mathbb{R}^{d\kappa}}}^{\prime}$ denotes the multiple Wiener-It\^{o} integral.

\begin{theorem}\label{th2} Let $\eta(x),$ $x\in \mathbb{R}^d,$ be a homogeneous isotropic Gaussian random
	field with $\mathbf{E}\eta(x)=0.$  If Assumptions~{\rm{\ref{ass1}}} and {\rm{\ref{ass2}}} hold, $\alpha \in (0,(d-1)/\kappa),$ and $H\mathrm{rank}\,G=\kappa\in\mathbb{N},$  then for $r\to \infty$ the random variables $X_{r,\kappa }$ converge weakly to $X_\kappa$.
	
\end{theorem}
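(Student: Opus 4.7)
The plan is to follow the classical Dobrushin--Major--Taqqu scheme adapted to surface integrals. First, I would use the spectral representation of $\eta(x)$ together with the Itô formula for Hermite polynomials to write
\[
H_\kappa(\eta(x)) = {\int\limits_{\mathbb{R}^{d\kappa}}}^{\prime} e^{i\langle x,\lambda_1+\cdots+\lambda_\kappa\rangle}\prod_{j=1}^{\kappa}\sqrt{f(\|\lambda_j\|)}\,W(\mathrm{d}\lambda_1)\cdots W(\mathrm{d}\lambda_\kappa).
\]
Then, by a stochastic Fubini argument, $X_{r,\kappa}$ becomes a single multiple Wiener--Itô integral with kernel
\[
Q_r(\lambda_1,\dots,\lambda_\kappa) = c_r(d,\alpha)\tfrac{C_\kappa}{\kappa!} r^{d-1}K\!\left(r\sum_{j}\lambda_j\right)\prod_{j=1}^{\kappa}\sqrt{f(\|\lambda_j\|)},
\]
since the inner surface integral over $S(r)$ reduces to $r^{d-1}K(r\sum_j \lambda_j)$ after the change $x=ru$, $u\in S(1)$.

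Next, I would invoke the self-similarity of the Gaussian white noise under the substitution $\mu_j = r\lambda_j$, which contributes an $r^{-d\kappa/2}$ factor. Together with $f(\|\mu_j\|/r) = c_2(d,\alpha)(\|\mu_j\|/r)^{\alpha-d}L(r/\|\mu_j\|)$ from Assumption~\ref{ass2}, a direct bookkeeping check (all $r$-powers cancel by the definition of $c_r(d,\alpha)$, and the $c_2(d,\alpha)$ factors also cancel) gives the clean representation
\[
X_{r,\kappa}\stackrel{d}{=} {\int\limits_{\mathbb{R}^{d\kappa}}}^{\prime}K\!\Big(\sum_{j}\mu_j\Big)\prod_{j=1}^{\kappa}\frac{1}{\|\mu_j\|^{(d-\alpha)/2}}\cdot\frac{\prod_{j=1}^\kappa L^{1/2}(r/\|\mu_j\|)}{L^{\kappa/2}(r)}\,W(\mathrm{d}\mu_1)\cdots W(\mathrm{d}\mu_\kappa).
\]

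The core analytic task is then to show that the kernels converge in $L^2(\mathbb{R}^{d\kappa})$ to the kernel of $X_\kappa$; by the isometry for multiple Wiener--Itô integrals this yields $L^2$-convergence of the random variables and hence weak convergence. Pointwise convergence of the slowly varying correction ratio to $1$ is immediate from the definition of $L(\cdot)$. The integrability of the candidate limit kernel is precisely Lemma~\ref{finint} applied with $\tau_1=\cdots=\tau_\kappa=\alpha$, where the hypothesis $\sum_j\tau_j=\kappa\alpha<d-1$ matches the assumption $\alpha<(d-1)/\kappa$.

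The main obstacle is upgrading pointwise convergence to $L^2$-convergence. This requires a uniform $L^2$-integrable majorant for the slowly varying factor $\prod_j L^{1/2}(r/\|\mu_j\|)/L^{\kappa/2}(r)$. The standard way forward is to split the domain into regions according to whether $\|\mu_j\|\le 1$ or $\|\mu_j\|>1$ (and similarly compared with $r$). On the region where $r/\|\mu_j\|\ge 1$ I would apply Remark~\ref{rem0} (quantitative slow variation under condition~\eqref{gr}) with a small $\delta>0$ to bound the factor by $1+Cg(r)h_\tau(1/\|\mu_j\|)\|\mu_j\|^{-\delta}$; on the complementary region I would use Potter-type bounds and the local boundedness of $L$ from Assumption~\ref{ass2}. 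Choosing $\delta$ small enough so that $\kappa(\alpha+2\delta)<d-1$ keeps the dominating kernel in $L^2$ by Lemma~\ref{finint} (now applied with $\tau_j=\alpha+2\delta$), and dominated convergence then delivers the required $L^2$-convergence of kernels, completing the proof.
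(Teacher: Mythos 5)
Your proposal is correct and follows essentially the same route as the paper: Itô's formula plus the stochastic Fubini theorem to represent $X_{r,\kappa}$ as a multiple Wiener--Itô integral with kernel $K(\lambda_1+\cdots+\lambda_\kappa)\prod_j\|\lambda_j\|^{(\alpha-d)/2}\,\prod_j L^{1/2}(r/\|\lambda_j\|)/L^{\kappa/2}(r)$ (the paper's $Q_r$ after scaling), followed by $L_2$-convergence of the kernels via slow-variation estimates and Lemma~\ref{finint}, exactly as in the cited proof of Theorem~2 of \cite{surfNew}. The only cosmetic caveat is that the quantitative bound of Remark~\ref{rem0} applies when the argument ratio $t=1/\|\lambda_j\|\geq 1$ (not merely $r/\|\lambda_j\|\geq 1$), with Potter-type bounds handling the complementary region, which your splitting indeed accommodates.
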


To obtain the rate of convergence of $X_{r, G}$ to $X_\kappa$ we will use some fine properties of Hermite-type distributions. We will denote the Wiener-It\^{o} integrals of rank $\kappa$ by $I_\kappa(f)$, where $f(\cdot)$ is an integrand. For more details about Wiener-It\^{o} integrals and admissible functions $f(\cdot)$ one can refer to \cite{Ito, Maj}. The following result was obtained in \cite{New} for specific form of the integrand. Since the proof does not rely on the form of the integrand, this theorem can be easily generalized as follows.
\begin{theorem}{\rm \cite{New}}\label{cmb}
	For any $\kappa \in \mathbb{N}$ and an arbitrary positive $\varepsilon$ it holds
	\[\rho\left(I_\kappa(f),I_\kappa(f)+\varepsilon\right)\leq C\varepsilon^{b},\]
	where $b = 1$ if $\kappa < 3$ and $b = 1/\kappa$ if $\kappa \geq 3$.
\end{theorem}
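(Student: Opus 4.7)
The plan is to reduce the claim to an anti-concentration (small-ball) estimate for the law of $I_\kappa(f)$. By the definition of the uniform metric and translation invariance,
\begin{equation*}
\rho\bigl(I_\kappa(f),\,I_\kappa(f)+\varepsilon\bigr)=\sup_{z\in\mathbb{R}}\bigl|P(I_\kappa(f)\le z)-P(I_\kappa(f)\le z-\varepsilon)\bigr|=\sup_{z\in\mathbb{R}}P\bigl(I_\kappa(f)\in(z-\varepsilon,z]\bigr),
\end{equation*}
so everything reduces to controlling, uniformly in $z$, how much mass the distribution of $I_\kappa(f)$ can place on an interval of length $\varepsilon$. Since this quantity depends only on the law of $I_\kappa(f)$, the specific shape of the kernel $f$ will not enter the argument beyond non-degeneracy, which justifies the remark in the paper that the proof carries over verbatim from \cite{New}.

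For the low-order cases $\kappa=1$ and $\kappa=2$ (so $b=1$), I would use the boundedness of the density of $I_\kappa(f)$. When $\kappa=1$, $I_1(f)$ is centred Gaussian with variance $\|f\|_{L^2}^2$, so its density is bounded by $(2\pi\|f\|_{L^2}^2)^{-1/2}$. When $\kappa=2$, one invokes the spectral decomposition $I_2(f)=\sum_k\mu_k(Z_k^2-1)$ with i.i.d.\ standard normal $Z_k$ and writes the law as a convolution of (infinitely many, for the kernels of interest) shifted chi-square distributions, so that the density is again bounded. In both cases
\begin{equation*}
\sup_{z}P\bigl(I_\kappa(f)\in(z-\varepsilon,z]\bigr)\le \|p_{I_\kappa(f)}\|_\infty\,\varepsilon\le C\varepsilon.
\end{equation*}

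For $\kappa\ge 3$ the density of $I_\kappa(f)$ need not be bounded, and the naive argument above breaks down; here I would appeal to the Carbery--Wright inequality for polynomials in Gaussian variables (equivalently, elements of a fixed Wiener chaos): for any $F$ in the $\kappa$-th chaos with $\mathbf{Var}(F)>0$ and any interval $J\subset\mathbb{R}$,
\begin{equation*}
P(F\in J)\le C_\kappa\bigl(\mathbf{Var}(F)\bigr)^{-1/(2\kappa)}|J|^{1/\kappa}.
\end{equation*}
Applied to $F=I_\kappa(f)$ and $|J|=\varepsilon$ this gives $\rho\le C\varepsilon^{1/\kappa}$, i.e.\ $b=1/\kappa$, finishing the proof. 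The main obstacle is exactly this last step: a direct derivation of Carbery--Wright requires hypercontractivity of the Ornstein--Uhlenbeck semigroup and induction on the degree, so in practice I would just cite it. A secondary technical point is checking that $\mathbf{Var}(I_\kappa(f))$ is bounded away from zero for the kernels appearing in the paper (so the $(\mathbf{Var}(F))^{-1/(2\kappa)}$ factor is absorbed in $C$); this uses the finiteness of the variance integral in Lemma~\ref{finint} together with the explicit form of $f$ inherited from the limit random variable $X_\kappa$.
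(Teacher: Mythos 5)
Your proposal is correct, and it follows essentially the route behind this statement, which the paper itself does not reprove but imports from \cite{New}: reduce $\rho\left(I_\kappa(f),I_\kappa(f)+\varepsilon\right)$ to the uniform small-ball quantity $\sup_z P\left(I_\kappa(f)\in(z-\varepsilon,z]\right)$, handle $\kappa=1,2$ by boundedness of the density, and quote an anti-concentration result for $\kappa\ge 3$. The only substantive difference is the external input for $\kappa\ge 3$: \cite{New} relies on the Davydov--Martynova theorem on H\"older continuity of order $1/\kappa$ for distribution functions of multiple Wiener--It\^{o} integrals, whereas you invoke the Carbery--Wright inequality applied to $I_\kappa(f)-z$ (note its $L^2$ norm is at least $\mathbf{Var}^{1/2}(I_\kappa(f))$, which gives the uniformity in $z$; also, since the constant $C$ may depend on $f$, you only need $\mathbf{Var}(I_\kappa(f))>0$, i.e.\ $f\neq 0$, not a uniform lower bound). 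Both yield the exponent $1/\kappa$; Carbery--Wright is more general (any fixed chaos/polynomial degree, with the hypercontractivity machinery hidden in the citation), while Davydov--Martynova is tailored exactly to Wiener--It\^{o} integrals, which is why the source paper uses it. One caveat worth making explicit: for $\kappa=2$ the bound with $b=1$ genuinely requires the kernel to have at least two nonzero eigenvalues in its spectral decomposition --- for a rank-one kernel $I_2(f)=\mu(Z^2-1)$ one only gets an increment of order $\varepsilon^{1/2}$ near the endpoint $-\mu$ --- so the blanket assertion that the proof ``does not rely on the form of the integrand'' needs this mild non-degeneracy, which your parenthetical ``infinitely many, for the kernels of interest'' (true for the Hermite-type limit $X_\kappa$ here) correctly supplies.
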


Also, we will use the following result.
\begin{lemma}{\rm\cite{surfNew}}\label{dst}
	Let $f_1(\cdot)$ and $f_2(\cdot)$ be symmetric functions in $L_2({\mathbb{R}^d}),\,d\geq1$. Then,
	\begin{equation*}
	\rho\left(I_\kappa(f_1),I_\kappa(f_2) \right) \leq C\|f_1 - f_2\|^{\frac{1}{\kappa+1/2}}, \quad \text{if } \kappa \geq 3,
	\end{equation*}
	and
	\begin{equation*}
	\rho\left(I_\kappa(f_1),I_\kappa(f_2) \right) \leq C\|f_1 - f_2\|^{\frac{2}{3}}, \quad \text{if } \kappa < 3.
	\end{equation*}
\end{lemma}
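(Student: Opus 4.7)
The plan is to reduce the comparison of $I_\kappa(f_1)$ and $I_\kappa(f_2)$ to the "shift" estimate already recorded in Theorem~\ref{cmb}. Since both $f_1$ and $f_2$ are symmetric, their difference $f_1-f_2$ is symmetric as well, and by linearity of the multiple Wiener--It\^o integral one has the representation
\begin{equation*}
I_\kappa(f_1)=I_\kappa(f_2)+I_\kappa(f_1-f_2).
\end{equation*}
This allows the use of Lemma~\ref{lem1} with $X=Z=I_\kappa(f_2)$ and $Y=I_\kappa(f_1-f_2)$, which for every $\varepsilon>0$ yields
\begin{equation*}
\rho\bigl(I_\kappa(f_1),I_\kappa(f_2)\bigr)\le \rho\bigl(I_\kappa(f_2)+\varepsilon,\,I_\kappa(f_2)\bigr)+P\bigl(|I_\kappa(f_1-f_2)|\ge\varepsilon\bigr).
\end{equation*}

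The first summand is handled directly by Theorem~\ref{cmb}, giving a bound of order $\varepsilon^{b}$, where $b=1$ for $\kappa<3$ and $b=1/\kappa$ for $\kappa\ge 3$. For the second summand I would apply Chebyshev's inequality together with the standard It\^o isometry
\begin{equation*}
\mathbf{E}\,|I_\kappa(f_1-f_2)|^2=\kappa!\,\|f_1-f_2\|^2,
\end{equation*}
which produces a bound of order $\|f_1-f_2\|^2/\varepsilon^{2}$. Combining the two contributions,
\begin{equation*}
\rho\bigl(I_\kappa(f_1),I_\kappa(f_2)\bigr)\le C\,\varepsilon^{b}+C\,\frac{\|f_1-f_2\|^2}{\varepsilon^{2}}.
\end{equation*}

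The final step is to optimise in $\varepsilon$ by balancing the two terms, namely choosing $\varepsilon^{b+2}=\|f_1-f_2\|^{2}$, i.e.\ $\varepsilon=\|f_1-f_2\|^{2/(b+2)}$. Substituting this back gives an upper bound of the form $C\|f_1-f_2\|^{2b/(b+2)}$. For $\kappa<3$ the choice $b=1$ yields the exponent $2/3$, while for $\kappa\ge 3$ the choice $b=1/\kappa$ yields $\frac{2/\kappa}{1/\kappa+2}=\frac{1}{\kappa+1/2}$, which matches the two cases in the statement. The only real obstacle is verifying that Theorem~\ref{cmb} is applicable in the required form (it is, since it holds for arbitrary admissible integrands), and checking that the Chebyshev step truly delivers the $\|f_1-f_2\|^2$ factor; both are routine consequences of the orthogonality properties of multiple Wiener--It\^o integrals.
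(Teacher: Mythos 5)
Your proposal is correct and follows essentially the same route as the proof in the cited source: write $I_\kappa(f_1)=I_\kappa(f_2)+I_\kappa(f_1-f_2)$ by linearity, apply Lemma~\ref{lem1} together with the shift estimate of Theorem~\ref{cmb} and Chebyshev's inequality via the It\^{o} isometry, and then optimise in $\varepsilon$, which is precisely how the exponents $\frac{2b}{2+b}$ (i.e.\ $\frac{2}{3}$ for $\kappa<3$ and $\frac{1}{\kappa+1/2}$ for $\kappa\geq 3$) arise. The only cosmetic caveat is that the resulting constant $C$ inherits the dependence of the constant in Theorem~\ref{cmb} on the fixed integrand (here $f_2$), which is consistent with how the lemma is used in the paper.
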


Now we are ready to formulate the main result.

\begin{theorem}\label{th4}
	Let $H 	\mathrm{rank}\,G=\kappa \in \mathbb{N}$ and Assumptions~{\rm{\ref{ass1}}} and {\rm{\ref{ass2}}} hold for $\alpha\in(0, \frac{d-1}{\kappa})$.
	
	If $\tau \in \left(-\frac{d - \kappa\alpha}{2},0\right)$ then for any $%
	\varkappa<\frac{b}{2+b}\min\left(\frac{\alpha(d-1-\kappa\alpha)}{%
		d-1-(\kappa-1)\alpha},\varkappa_1\right)$ 
	\begin{equation*}
	{\rho}\left(X_{r, G},X_\kappa\right)=o (r^{-\varkappa}),\quad
	r\rightarrow \infty ,
	\end{equation*}
	where 
	$
	\varkappa_1:=\min\left(-2\tau,\frac{1}{\frac{1}{d-2\alpha}+ \dots +\frac{1}{%
			d-\kappa\alpha} +\frac{1}{%
			d-1-\kappa\alpha}}\right)
	$
	and $b$ is the parameter from Theorem~{\rm\ref{cmb}}.
	
	If $\tau=0$ then 
	\begin{equation*}
	{\rho}\left( X_{r, G},X_\kappa\right)=g^{\frac{2b}{2+b}}(r), \quad
	r\rightarrow \infty.
	\end{equation*}
\end{theorem}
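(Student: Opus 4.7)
The plan is to decompose $X_{r,G} = X_{r,\kappa} + V_r$ as in (\ref{intf}) and apply Lemma \ref{lem1} with $X = X_{r,\kappa}$, $Y = V_r$, $Z = X_\kappa$, giving
\[
\rho(X_{r,G}, X_\kappa) \leq \rho(X_{r,\kappa}, X_\kappa) + \rho(X_\kappa + \varepsilon, X_\kappa) + P(|V_r| \geq \varepsilon).
\]
The three pieces will be estimated separately and $\varepsilon$ optimised at the end. The coefficient $\tfrac{b}{2+b}$ in the statement is explained by the identities $\tfrac{2b}{2+b}=\tfrac{2}{3}$ when $\kappa<3$ and $\tfrac{2b}{2+b}=\tfrac{1}{\kappa+1/2}$ when $\kappa\geq 3$, which match both the exponent in Lemma \ref{dst} and the value obtained by balancing $\varepsilon^b$ (from Theorem \ref{cmb}) against $\varepsilon^{-2}\mathbf{E}V_r^2$ (from Markov's inequality).

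For the first term I would realise $X_{r,\kappa}$ and $X_\kappa$ as Wiener-It\^{o} integrals on a common probability space. Using the spectral representation of $\eta$, the diagram formula for $H_\kappa$, the change of variables $\mu_i = r\lambda_i$, and the self-similarity of complex white noise, the normalisation $c_r(d,\alpha)$ is tuned so that $X_{r,\kappa} = I_\kappa(Q_r)$ and $X_\kappa = I_\kappa(Q)$ with
\[
Q(\mu) = \frac{K(\mu_1+\cdots+\mu_\kappa)}{\|\mu_1\|^{(d-\alpha)/2}\cdots\|\mu_\kappa\|^{(d-\alpha)/2}},\qquad Q_r(\mu) = Q(\mu)\prod_{i=1}^\kappa \sqrt{\frac{L(r/\|\mu_i\|)}{L(r)}}.
\]
Lemma \ref{dst} then gives $\rho(X_{r,\kappa},X_\kappa) \leq C\|Q_r-Q\|^{2b/(b+2)}$. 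The $L_2$-norm is controlled by splitting $\mathbb{R}^{d\kappa}$ according to the magnitude of each $\|\mu_i\|$, applying Remark \ref{rem0} with the SR2 condition of Assumption \ref{ass2} to bound the product of slowly varying ratios by $Cg(r)h_\tau(1/\|\mu_i\|)\|\mu_i\|^{-\delta}$, and then using a generalised H\"older inequality whose exponents are constrained by the integrability condition of Lemma \ref{finint}. The optimal allocation of the singularities $\|\mu_i\|^{-(d-\alpha)}$ subject to this constraint generates the harmonic-mean quantity in $\varkappa_1$; the regular variation $g\in R_\tau$ then converts the $g(r)$ prefactor into $r^\tau$ when $\tau<0$ or keeps it as $g(r)$ when $\tau=0$.

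For the remaining two pieces Theorem \ref{cmb} gives $\rho(X_\kappa+\varepsilon,X_\kappa)\leq C\varepsilon^b$ and Markov's inequality gives $P(|V_r|\geq\varepsilon)\leq \varepsilon^{-2}\mathbf{E}V_r^2$. By Remark \ref{rem1}, orthogonality of distinct Hermite polynomials of a Gaussian vector yields
\[
\mathbf{E}V_r^2 = c_r^2(d,\alpha)\sum_{j\geq \kappa+1}\frac{C_j^2}{j!}\int\limits_{S(r)}\int\limits_{S(r)}B^j(\|x-y\|)\,h_{sp}\!\left(\tfrac{x}{\|x\|}\right)h_{sp}\!\left(\tfrac{y}{\|y\|}\right)\sigma(\mathrm{d}x)\sigma(\mathrm{d}y).
\]
Passing to the distance density via (\ref{dint}), summing the series in $j$ using Parseval's identity, and invoking the decay $B(\rho)\sim\rho^{-\alpha}L_0(\rho)$ from Assumption \ref{ass1}, the double integral is then controlled by interpolating the singularity of $B^{\kappa+1}(\|x-y\|)$ against the geometric scaling of $S(r)\times S(r)$, which produces the rate $\mathbf{E}V_r^2 = O(r^{-\alpha(d-1-\kappa\alpha)/(d-1-(\kappa-1)\alpha)})$. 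Balancing $C\varepsilon^b + \varepsilon^{-2}\mathbf{E}V_r^2$ at $\varepsilon \sim (\mathbf{E}V_r^2)^{1/(b+2)}$ produces the bound $(\mathbf{E}V_r^2)^{b/(b+2)}$, contributing the first term inside the $\min$.

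The principal obstacle is the $L_2$ bound on $Q_r-Q$: the Fourier factor $K(\mu_1+\cdots+\mu_\kappa)$ couples all integration variables so that a naive variable-by-variable argument fails, and Lemma \ref{finint} permits only allocations of singularities whose exponents sum to less than $d-1$. The optimal H\"older splitting under this constraint is exactly what generates the harmonic-mean quantity in $\varkappa_1$. Once this bound is in hand, assembling the three estimates, taking the weakest exponent, and invoking regular variation to handle $g(r)$ completes both cases of the statement.
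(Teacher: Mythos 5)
Your proposal is correct and follows essentially the same route as the paper's proof: the same decomposition $X_{r,G}=X_{r,\kappa}+V_r$ combined with Lemma~\ref{lem1}, Theorem~\ref{cmb} and Chebyshev's inequality, the same optimized variance bound $\mathrm{Var}\,V_r=O\bigl(r^{-\alpha(d-1-\kappa\alpha)/(d-1-(\kappa-1)\alpha)+\delta}\bigr)$ balanced against $\varepsilon^{b}$, and the same Wiener--It\^{o} representation with Lemma~\ref{dst} reducing $\rho(X_{r,\kappa},X_\kappa)$ to an $L_2$ estimate of $Q_r-1$, handled by the splitting/H\"older argument (which the paper delegates to \cite{surfNew}) that produces $\varkappa_1$ and the $\tau=0$ case. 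No substantive differences.
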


\section{Proofs of the result from section \ref{sec2}}\label{sec3}

In this section the results stated in Section~2 are proved. In \cite{surfNew} Theorems \ref{th1}, \ref{th2}, \ref{th4} and Lemma~\ref{finint} were proved in the particular case $h_{sp}(\cdot) \equiv 1$. Since the proofs for the general setting are very similar to the ones in \cite{surfNew}, only parts of the proofs that differ are provided.

\begin{proof}[Proof of Theorem~\ref{th1}]
	
	By Remark~\ref{rem1} it holds $\mathrm{Var}X_{r, G} = \mathrm{Var}X_{r,\kappa } + \mathrm{Var}V_{r}$.
	
	By (\ref{hvar}) and (\ref{dint}) we get
	\[
	{\rm Var}\,V_r = c_r^2(d, \alpha)\sum_{j\geq \kappa + 1}\frac{C^2_j}{j!}
	\int\limits_{S(r)}\int\limits_{S(r)}\frac{L_0^j\left(\left\| x-y\right\| \right)}{\left\|
		x-y\right\|^{\alpha j}}h_{sp}\left(\frac{x}{\|x\|}\right)h_{sp}\left(\frac{y}{\|y\|}\right)\sigma(\mathrm{d}x)\, \sigma(\mathrm{d}y)
	\]
	\[
	\leq \frac{\kappa!^2c_2^{-\kappa}(d, \alpha)}{C^2_\kappa r^{2d-2-\kappa\alpha}L^\kappa(r)}\max\limits_{x\in S(1)}|h_{sp}(x)|^2\sum_{j\geq \kappa + 1}\frac{C^2_j}{j!}
	\int\limits_{S(r)}\int\limits_{S(r)}\frac{\left|L_0\left(\left\| x-y\right\| \right)\right|^j}{\left\|
		x-y\right\|^{\alpha j}} \sigma(\mathrm{d}x) \sigma(\mathrm{d}y)
	\]
	
	\[
	=\frac{\kappa!^2c_2^{-\kappa}(d, \alpha)r^{2d-2}}{C^2_\kappa r^{2d-2-\kappa\alpha}L^\kappa(r)}\max\limits_{x\in S(1)}|h_{sp}(x)|^2\sum_{j\geq \kappa + 1}\frac{C^2_j}{j!}
	\int\limits_{S(1)}\int\limits_{S(1)}\frac{\left|L_0\left(r\left\| x-y\right\| \right)\right|^j}{r^{\alpha j}\left\|
		x-y\right\|^{\alpha j}}\sigma(\mathrm{d}x)\sigma(\mathrm{d}y)
	\]
	\[
	= \frac{\kappa!^2|S(1)|^2 r^{2d-2}}{c_2^{\kappa}(d, \alpha)C^2_\kappa r^{2d-2-\kappa\alpha}L^\kappa(r)}\max\limits_{x\in S(1)}|h_{sp}(x)|^2\sum_{j\geq \kappa+1}\frac{C_j ^2}{j!}
	\int\limits_0^{2}\frac{\left|L_0\left(rz\right)\right|^j}{(rz)^{\alpha j}}\psi _{S(1)}(z)\mathrm{d}z.
	\]
	
	It follows from  $z^{-\alpha} \left|L_0\left(z\right)\right|\in[0,1],$ $z\ge 0,$  that
	\[
	{\rm Var}\,V_r \leq \frac{\kappa!^2|S(1)|^2 r^{2d-2-(\kappa+1)\alpha}}{c_2^{\kappa}(d, \alpha)C^2_\kappa r^{2d-2-\kappa\alpha}L^\kappa(r)}\max\limits_{x\in S(1)}|h_{sp}(x)|^2\sum_{j\geq \kappa+1}\frac{C_j ^2}{j!}
	\int\limits_0^{2}\frac{\left|L_0\left(rz\right)\right|^{\kappa + 1}}{z^{\alpha(\kappa + 1)}}\psi _{S(1)}(z)\mathrm{d}z.
	\]
	\[
	= \frac{\kappa!^2|S(1)|^2\left|L_0(r)\right|^\kappa}{c_2^{\kappa}(d, \alpha)C^2_\kappa L^\kappa(r)}\max\limits_{x\in S(1)}|h_{sp}(x)|^2\sum_{j\geq \kappa+1}\frac{C_j ^2}{j !}
	\int\limits_0^{2} z^{-\alpha\kappa} \frac{\left|L_0\left(rz\right)\right|^{\kappa}}{\left|L_0(r)\right|^{\kappa}}  \frac{\left|L_0\left(rz\right)\right|}{(rz)^{\alpha}}\psi _{S(1)}(z)\mathrm{d}z.
	\]
	
	Considering $|L_0(\cdot)|$ as a new slowly varying function, one can estimate the integral above using bounds (9) and (10) in \cite{surfNew} to obtain
	
	\begin{equation}\label{varb}
	{\rm Var}\,V_r \leq C\,\frac{\left|L_0(r)\right|^{\kappa}}{L^{\kappa}(r)}\left( r^{-\beta_1(d-1-\kappa\alpha-\delta)}
	+
	o\left(r^{-(\alpha-\delta)(1-\beta_1)}\right)\right),
	\end{equation}
	where $\delta$ is an arbitrary number in $(0, \min(\alpha, d - 1 - \kappa\alpha))$.
	
	Similar to $\mathrm{Var}\, V_r$ we obtain
	
	\[
	\mathrm{Var}X_{r,\kappa} = \frac{1}{c_2^{\kappa}(d, \alpha)L^\kappa(r)}
	\int\limits_{{S}(1)}\int\limits_{{S}(1
		)}\frac{L_0^\kappa\left(r\left\| x-y\right\| \right)}{\left\|
		x-y\right\|^{\alpha\kappa}} h_{sp}\left(x\right)h_{sp}\left(y\right)\mathrm{d}\sigma(x)\, \mathrm{d}\sigma(y).
	\]
	
	Let us consider a constant $c_h > \max\limits_{x}|h_{sp}(x)|$ and let $c_I := \int\limits_{{S}(1
		)}(h_{sp}(x) + c_h)\mathrm{d}\sigma(x)$. The function $\displaystyle\frac{h_{sp}(x) + c_h}{c_I}$ can be considered as a probability density function of some random variable $X$ defined on $S(1)$. Therefore, 
	\[
	\mathrm{Var} \, X_{r,\kappa} = \frac{c_I^2}{c_2^{\kappa}(d, \alpha)L^\kappa(r)}
	\int\limits_{{S}(1)}\int\limits_{{S}(1
		)}\frac{L_0^\kappa\left(r\left\| x-y\right\| \right)}{\left\|
		x-y\right\|^{\alpha\kappa}} \left(\frac{h_{sp}(x) + c_h}{c_I}\right)
	\]
	\[
	\times\left(\frac{h_{sp}(y) + c_h}{c_I}\right)\mathrm{d}\sigma(x)\, \mathrm{d}\sigma(y)
	- 2\frac{c_I|{S}(1)|}{c_2^{\kappa}(d, \alpha)L^\kappa(r)}
	\int\limits_{{S}(1)}\int\limits_{{S}(1
		)}\frac{L_0^\kappa\left(r\left\| x-y\right\| \right)}{\left\|
		x-y\right\|^{\alpha\kappa}} \left(\frac{h_{sp}(x) + c_h}{c_I}\right)
	\]
	\[
	\times\frac{ c_h}{|{S}(1)|}\mathrm{d}\sigma(x)\, \mathrm{d}\sigma(y)
	- \frac{|{S}(1)|^2}{c_2^{\kappa}(d, \alpha)L^\kappa(r)}
	\int\limits_{{S}(1)}\int\limits_{{S}(1
		)}\frac{L_0^\kappa\left(r\left\| x-y\right\| \right)}{\left\|
		x-y\right\|^{\alpha\kappa}} \left(\frac{ c_h}{|{S}(1)|}\right)^2\mathrm{d}\sigma(x)\, \mathrm{d}\sigma(y) 
	\]
	\[
	= \frac{c_I^2}{c_2^{\kappa}(d, \alpha)L^\kappa(r)}\mathrm{E} \left[\frac{L_0^\kappa\left(r\left\| X_1-X_2\right\| \right)}{\left\|
		X_1-X_2\right\|^{\alpha\kappa}}\right]
	 -2\frac{c_I|{S}(1)|c_h}{c_2^{\kappa}(d, \alpha)L^\kappa(r)}\mathrm{E} \left[\frac{L_0^\kappa\left(r\left\| X_1-Y_1\right\| \right)}{\left\|
		X_1-Y_1\right\|^{\alpha\kappa}}\right]
	\]
	\[ - \frac{|{S}(1)|^2c_h^2}{c_2^{\kappa}(d, \alpha)L^\kappa(r)}\mathrm{E} \left[\frac{L_0^\kappa\left(r\left\| Y_1- Y_2\right\| \right)}{\left\|
		Y_1 - Y_2\right\|^{\alpha\kappa}}\right],
	\]
	where $X_1,\,X_2$ are random variables distributed on $S(1)$ with the probability density $\displaystyle\frac{h_{sp}(x) + c_h}{c_I}$ and $Y_1,\,Y_2$ are uniformly distributed on $S(1)$ random variables. 
	
	Let $Z_1 := \left\| X_1-X_2\right\|$, $Z_2 := \left\| X_1-Y_1\right\|$, $Z_3 := \left\| Y_1-Y_2\right\|$ and denote their corresponding probability densities as $\bar{\psi}_{S(1)}(\cdot)$, $\tilde{\psi}_{S(1)}(\cdot)$ and $\psi_{S(1)}(\cdot)$. Thus
	\[
	\mathrm{Var} \, X_{r,\kappa} = \frac{c_I^2}{c_2^{\kappa}(d, \alpha)L^\kappa(r)}\int\limits_0^{2} z^{-\alpha\kappa}L_0^\kappa\left(rz\right)\bar{\psi}_{S(1)}(z)\mathrm{d}z -2\frac{c_I|{S}(1)|c_h}{c_2^{\kappa}(d, \alpha)L^\kappa(r)}
	\]
	\[
	\times\int\limits_0^{2} z^{-\alpha\kappa}L_0^\kappa\left(rz\right)\tilde{\psi} _{S(1)}(z)\mathrm{d}z
	- \frac{|{S}(1)|^2c_h^2}{c_2^{\kappa}(d, \alpha)L^\kappa(r)}\int\limits_0^{2} z^{-\alpha\kappa}L_0^\kappa\left(rz\right){\psi}_{S(1)}(z)\mathrm{d}z.
	\]
	
	If $\alpha \in (0,(d-1)/\kappa)$ then by asymptotic properties of integrals of slowly varying functions (see Theorem~2.7 in \cite{sen}) we get

	\begin{equation}\label{varf}
	\mathrm{Var} \, X_{r,\kappa}=\frac{(\bar{c}_1(\kappa,\alpha) +\tilde{c}_1(\kappa,\alpha)+{c}_1(\kappa,\alpha))}{c_2^{\kappa}(d, \alpha)}\, \frac{L^{\kappa}_0(r)}{L^{\kappa}(r)}(1+o(1)),
	\quad r\to \infty,
	\end{equation}	
	where $\bar{c}_1(\kappa,\alpha):=c_I^2\int\limits_0^{2}\frac{\bar{\psi}_{S(1)}(z)}{z^{\alpha\kappa}}\mathrm{d}z$, $\tilde{c}_1(\kappa,\alpha):=-2c_I|{S}(1)|c_h\int\limits_0^{2}\frac{\tilde{\psi}_{S(1)}(z)}{z^{\alpha\kappa}} \mathrm{d}z$, and	${c}_1(\kappa,\alpha) := {-\displaystyle|{S}(1)|^2c_h^2\int\limits_0^{2}\frac{{\psi}_{S(1)}(z)}{z^{\alpha\kappa}} \mathrm{d}z}$.

	By (\ref{dint}) it holds
	\[\bar{c}_1(\kappa,\alpha) +\tilde{c}_1(\kappa,\alpha)+{c}_1(\kappa,\alpha) = \int\limits_{{S}(1)}\int\limits_{{S}(1
		)}\frac{(h_{sp}(x) + c_h)(h_{sp}(y) + c_h)}{\left\|
		x-y\right\|^{\alpha\kappa}}\mathrm{d}\sigma(x)\, \mathrm{d}\sigma(y) \]
	\[
	-2c_h\int\limits_{{S}(1)}\int\limits_{{S}(1
		)}\frac{(h_{sp}(x) + c_h)}{\left\|
		x-y\right\|^{\alpha\kappa}}\mathrm{d}\sigma(x)\, \mathrm{d}\sigma(y) -c_h^2\int\limits_{{S}(1)}\int\limits_{{S}(1
		)}\frac{1}{\left\|
		x-y\right\|^{\alpha\kappa}}\mathrm{d}\sigma(x)\, \mathrm{d}\sigma(y)\]
	\[
	\leq \max\limits_{x}|h_{sp}(x)|^2|S(1)|^2\int\limits_0^{2}\frac{{\psi}_{S(1)}(z)}{z^{\alpha\kappa}} \mathrm{d}z < \infty.
	\]
	By (\ref{varb}) and (\ref{varf}), one can choose $\beta_1=1/2$ and make $\delta$ arbitrary close to 0 to obtain
	\[
	\lim_{r\to \infty }\frac{\mathrm{Var}\, V_r}{\mathrm{Var}\, X_{r, G}}=0\quad \mbox{and}\quad
	\lim_{r\to \infty }\frac{\mathrm{Var}\, X_{r, G}}{\mathrm{Var}\, X_{r,\kappa}} = 1.
	\]
	
	Thus
	\[
	\lim_{r\to \infty }\, \mathrm{E}\left(\frac{X_{r, G}}{\sqrt{ \mathrm{Var} \ X_{r, G}}}-\frac{X_{r,\kappa}}{\sqrt{ \mathrm{Var} \ X_{r,\kappa}}}\right)^2=\lim_{r\to \infty }\frac{\mathrm{E}\left(V_r+\left(1-\sqrt{\frac{\mathrm{Var} X_{r, G}}{\mathrm{Var} X_{r,\kappa}}}\right)X_{r,\kappa}\right)^2}{\mathrm{Var} X_{r, G}} =0,\]
	and
	\[
	\lim_{r\to \infty }\, \mathrm{E}\left(\frac{X_{r, G}}{\sqrt{ \mathrm{Var} \ X_{r,\kappa}}}-\frac{X_{r,\kappa}}{\sqrt{ \mathrm{Var} \ X_{r,\kappa}}}\right)^2=\lim_{r\to \infty }\frac{\mathrm{E}\left(V_r\sqrt{\frac{\mathrm{Var} X_{r, G}}{\mathrm{Var} X_{r,\kappa}}}\right)^2}{\mathrm{Var} X_{r, G}} =0\]
	which completes the proof. \qedhere
	
\end{proof}

To obtain Lemma~\ref{finint} a result analogous to the average decay rate of the Fourier transforms of surface measures in \cite{Ios} is needed. Therefore we prove the following lemma.

\begin{lemma}\label{avdw} For sufficiently large $r$ it holds
	\begin{equation}\label{adw}
	\int\limits_{S(1)}|K(\omega r)|^2\,\mathrm{d}\omega\leq Cr^{1-d}.
	\end{equation}
\end{lemma}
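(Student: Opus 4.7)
The plan is to establish a pointwise bound $|K(r\omega)|\le Cr^{-(d-1)/2}$ uniformly in $\omega\in S(1)$ by the method of stationary phase, and then integrate the squared estimate over $S(1)$ to obtain exactly the rate $r^{1-d}$ claimed in~(\ref{adw}).

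First, for each $\omega\in S(1)$, I would view
\[
K(r\omega)=\int\limits_{S(1)}e^{ir\langle\omega,u\rangle}h_{sp}(u)\,\sigma({\rm d}u)
\]
as an oscillatory integral on the compact smooth $(d-1)$-dimensional manifold $S(1)$ with amplitude $h_{sp}$ and phase $\phi_\omega(u):=\langle\omega,u\rangle$. The critical points of $\phi_\omega$ restricted to $S(1)$ are the points $u\in S(1)$ where $\nabla\phi_\omega=\omega$ is normal to $S(1)$ at $u$; since the outer unit normal to $S(1)$ at $u$ coincides with $u$, this forces $u=\pm\omega$. A direct computation in any local orthonormal frame of $T_{\pm\omega}S(1)$ yields restricted Hessians $\mp\,\mathrm{Id}_{d-1}$, which are nondegenerate with $\omega$-independent eigenvalues; this reflects the nonvanishing curvature of the sphere.

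Since $h_{sp}$ is smooth on the compact manifold $S(1)$, the classical stationary phase lemma (e.g.\ H\"ormander, Theorem 7.7.5) applies, after a partition of unity localising near $u=\pm\omega$ together with a standard non-stationary integration-by-parts argument on the complement, and delivers
\[
|K(r\omega)|\le C\,r^{-(d-1)/2}
\]
uniformly in $\omega\in S(1)$ for all sufficiently large $r$, with $C$ depending only on finitely many $C^k$-seminorms of $h_{sp}$. Squaring and integrating over $S(1)$ gives
\[
\int\limits_{S(1)}|K(r\omega)|^2\,{\rm d}\omega\le C^2|S(1)|\,r^{-(d-1)}=C'r^{1-d},
\]
which is~(\ref{adw}).

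The main obstacle I anticipate is securing uniformity of the stationary-phase constant in~$\omega$. This is resolved by compactness: the critical set $\{(\omega,\pm\omega):\omega\in S(1)\}\subset S(1)\times S(1)$ is compact, the restricted Hessian is independent of~$\omega$ up to orthogonal change of tangential coordinates, and $h_{sp}$ along with all its derivatives is bounded on~$S(1)$, so a single constant~$C$ suffices. A more explicit but longer alternative, closer in spirit to the computations appearing elsewhere in the paper, would be to apply Parseval to recast $\int_{S(1)}|K(r\omega)|^2{\rm d}\omega$ as $\iint h_{sp}(u)\overline{h_{sp}(v)}\,\widehat{\sigma}_{S(1)}(r(u-v))\,\sigma({\rm d}u)\,\sigma({\rm d}v)$, with $\widehat{\sigma}_{S(1)}$ proportional to $Y_d(\|\cdot\|)$, split the $(u,v)$-integration into the near-diagonal regime $\|u-v\|\le 1/r$ and the off-diagonal regime, and then exploit the oscillations of the Bessel function via integration by parts in the off-diagonal part; this reproduces the same cancellation at the cost of more bookkeeping.
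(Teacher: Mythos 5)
Your proposal is correct and follows essentially the same route as the paper: the paper simply quotes the pointwise decay $|K(x)|\le C\|x\|^{(1-d)/2}$ for Fourier transforms of smooth densities on hypersurfaces with nonvanishing Gaussian curvature (Theorem 1 in Stein, p.~322) and then integrates the squared bound over $S(1)$, exactly as you do. Your stationary-phase derivation at the critical points $u=\pm\omega$ just proves the cited ingredient instead of quoting it, and your uniformity-in-$\omega$ argument (or, more simply, rotation invariance of the sphere) is sound.
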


\begin{proof}
	Using Theorem 1 in \cite[p. 322]{Stein} we obtain $|K(x)|\leq  C\|x\|^{\frac{1-d}{2}}.$
	
	It follows
	\[\int\limits_{S(1)}|K(\omega r)|^2\,\mathrm{d}\omega\leq \int\limits_{S(1)}Cr^{1-d}\,\mathrm{d}\omega\leq Cr^{1-d}.\qedhere\]
	
\end{proof}

Now, we can prove Lemma~\ref{finint} using bound (\ref{adw}) analogously to Lemma~2 in \cite{surfNew}.

\begin{proof}[Proof of Lemma~\ref{finint}]
	For $\kappa= 1$ it holds $d-\tau_1>1$. By the properties of the Fourier transform $|K(\lambda)| 
	\leq C|S(1)|$ for all $\lambda \in \mathbb{R}^d$. Therefore, using integration formula for polar coordinates we get	
	\[
	\int\limits_{\mathbb{R}^{d}}|K(\lambda)|^2  \frac{\mathrm{d}\lambda}{\left\| \lambda\right\| ^{d-\tau_1}}= \int\limits_{0}^{\infty}r^{d-1}\int\limits_{S(1)}\frac{|K(\omega r)|^2}{r^{d-\tau_1}}\,\mathrm{d}\omega\mathrm{d}r = \int\limits_{0}^{r_0}r^{d-1}\int\limits_{S(1)}\frac{|K(\omega r)|^2}{r^{d-\tau_1}}\,\mathrm{d}\omega\mathrm{d}r \]
	\[+ \int\limits_{r_0}^{\infty}r^{d-1}\int\limits_{S(1)}\frac{|K(\omega r)|^2}{r^{d-\tau_1}}\,\mathrm{d}\omega\mathrm{d}r
	\leq C|S(1)|^2\int\limits_{0}^{r_0}\frac{r^{d-1}\mathrm{d}r}{r^{d-\tau_1}} + \int\limits_{r_0}^{\infty}r^{d-1}\int\limits_{S(1)}\frac{|K(\omega r)|^2}{r^{d-\tau_1}}\,\mathrm{d}\omega\mathrm{d}r.\]
	By Lemma~\ref{avdw} we obtain
	\[
	\int\limits_{\mathbb{R}^{d}}|K(\lambda)|^2  \frac{\mathrm{d}\lambda}{\left\| \lambda\right\| ^{d-\tau_1}} \leq C |{S}(1)|^2\int\limits_{0}^{r_0}\frac{\mathrm{d}r}{r^{1-\tau_1}} +C\int\limits_{r_0}^{\infty}\frac{r^{-d+1}}{r^{1-\tau_1}}\mathrm{d}r\]
	\[
	= C|{S}(1)|^2\int\limits_{0}^{r_0}\frac{\mathrm{d}r}{r^{1-\tau_1}} +C\int\limits_{r_0}^{\infty}\frac{\mathrm{d}r}{r^{d-\tau_1}} <\infty.
	\]
	
	For $\kappa> 1$ one can obtain (\ref{finv}) by the recursive estimation routine and the change of variables $u=\lambda_{\kappa-1}+\lambda_\kappa$ and $\tilde{\lambda}_{\kappa-1}={\lambda_{\kappa-1}}/{\left\| u\right\|}$, see \cite{surfNew} for details.	
\end{proof}

The remaining theorems consider the random variables $X_{r,\kappa}$ and $X_\kappa$. As it was mentioned before, in \cite{surfNew} these theorems were proved for the particular case $h_{sp}(\cdot) \equiv 1$. Up to multiplication by a constant, $X_\kappa$ in this paper is identical to its counterpart $X_\kappa(\Delta)$ in \cite{surfNew} and does not depend on the weight function $h_{sp}(\cdot)$. If $X_{r,\kappa}$ is represented as the multiple Wiener-It\^{o} stochastic integral the function $h_{sp}(\cdot)$ appears only in the function $K(\cdot)$.  Note that Lemma~\ref{finint} and bound~(\ref{adw}) for $K(\cdot)$ are the same as the corresponding results  for $\mathcal{K}(\cdot)$ in \cite{surfNew}.  Since Lemma~\ref{finint} and bound~(\ref{adw}) are the only results concerning $K(\cdot)$ that are required, to prove theorems \ref{th2} and \ref{th4} one can follow the corresponding proofs in \cite{surfNew}. Therefore, only the key details in the proofs for Theorems \ref{th2} and \ref{th4} are provided.

\begin{proof}[Proof of Theorem~\ref{th2}]
	Using It\^{o} formula (2.3.1) in \cite{LeoLT} we obtain
	\[
	\int\limits_{S(r) } H_\kappa(\eta (x))h_{sp}\left(\frac{x}{\|x\|}\right)\sigma(\mathrm{d}x)=\int\limits_{S(r) }{\int\limits_{\mathbb{R}^{d\kappa}}}^{\prime}e^{i<\lambda _1+\cdots +\lambda
		_\kappa,x>}\]
	\[\times \prod\limits_{j=1}^\kappa\sqrt{f(\|\lambda _j\|)} W(\mathrm{d}\lambda
	_j)h_{sp}\left(\frac{x}{\|x\|}\right)\sigma(\mathrm{d}x).
	\]
	As $h_{sp}(\cdot)$ is bounded and $\prod\limits_{j=1}^\kappa\sqrt{f(\|\lambda _j\|)}\in L_2(\mathbb{R}^{d\kappa})$ then the stochastic Fubini theorem, see Theorem~5.13.1 in \cite{pec}, can be used to interchange the integrals. It results in
	\begin{equation}\label{spl}
	X_{r,\kappa}\stackrel{\mathcal{D}}{=}
	{\int\limits_{\mathbb{R}^{d\kappa}}}^{\prime}\frac{K(\lambda _1+\cdots +\lambda _\kappa)Q_r(\lambda _1,\ldots ,\lambda _\kappa)W(\mathrm{d}\lambda
		_1)\ldots W(\mathrm{d}\lambda _\kappa)}{\left\| \lambda _1\right\| ^{(d-\alpha )/2}\cdots
		\left\| \lambda _\kappa\right\| ^{(d-\alpha )/2}},
	\end{equation}
	where
	\[
	Q_r(\lambda _1,\ldots ,\lambda _\kappa): =r^{\kappa(\alpha-d)/2}L^{-\kappa/2}(r)\
	c_2^{-\kappa/2}(d,\alpha)  \left[ \prod\limits_{j=1}^\kappa\left\| \lambda _j\right\| ^{d-\alpha}f\left( \frac{\left\| \lambda _j\right\| }r\right) \right] ^{1/2}.
	\]

	Using Lemma~\ref{finint} instead of its counterpart in \cite{surfNew}, the rest of the theorem can be proven analogously to Theorem~2 in \cite{surfNew}. 
\end{proof}

\begin{proof}[Proof of Theorem~\ref{th4}]
	Since $X_{r, G} = X_{r,\kappa}+V_r$, applying Chebyshev's inequality and Lemma~\ref{lem1} to $X=X_{r, \kappa}$, $Y=V_r$ and $Z=X_{\kappa},$ we get
	\begin{equation}\label{midrate}
	{\rho}\left( X_{r, G},X_\kappa\right)={\rho}\left( X_{r,\kappa}+V_r,X_\kappa\right)		
	\leq {\rho}\left(X_{r, \kappa},X_\kappa\right)+{\rho}\left(X_{\kappa}+\varepsilon, X_\kappa\right)+\varepsilon^{-2}{\rm Var}\,V_r.
	\end{equation}
	
	It follows from Theorem~\ref{cmb} that 
	\[{\rho}\left(X_\kappa+\varepsilon,X_\kappa\right)\le C\varepsilon^b.\]

	By (\ref{varb}) it holds
	\[
	{\rm Var}\,V_r \leq C\,\frac{\left|L_0(r)\right|^{\kappa}}{L^{\kappa}(r)}\left( r^{-\beta_1(d-1-\kappa\alpha-\delta)}
	+
	o\left(r^{-(\alpha-\delta)(1-\beta_1)}\right)\right).
	\]
	Since, by Remark~\ref{eql}, $L_0(\cdot) \sim L(\cdot)$, we can replace $L_0(\cdot)$ by $L(\cdot)$ in the above estimate. Thus, choosing $\beta_1=\frac{\alpha}{d-1-(\kappa - 1)\alpha}$ to minimize the upper bound we get
	\[
	{\rm Var}\,V_r \leq C r^{-\frac{\alpha(d-1-\kappa\alpha)}{d-1-(\kappa - 1)\alpha}+\delta}.
	\]

	Choosing $\varepsilon:=r^{-\frac{\alpha(d-1-\kappa\alpha)}{(2+b)(d-1-(\kappa - 1)\alpha)}}$ to minimize the second term in estimate (\ref{midrate}) we obtain
	\[
	{\rho}\left(X_{r, G},X_\kappa\right)\le {\rho}\left(X_{r, \kappa},X_\kappa\right)+C\,r^{\frac{-b\alpha(d-1-\kappa\alpha)}{(2+b)(d-1-(\kappa - 1)\alpha)}+\delta}.
	\]
	
	By (\ref{spl}) 
	\[X_{r,\kappa}\stackrel{\mathcal{D}}{=}
	{\int\limits_{\mathbb{R}^{d\kappa}}}^{\prime}\frac{K(\lambda _1+\cdots +\lambda _\kappa)Q_r(\lambda _1,\ldots ,\lambda _\kappa)W(\mathrm{d}\lambda
		_1)\ldots W(\mathrm{d}\lambda _\kappa)}{\left\| \lambda _1\right\| ^{(d-\alpha )/2}\cdots
		\left\| \lambda _\kappa\right\| ^{(d-\alpha )/2}}.\]
	Therefore, ${\rho}\left(X_{r, \kappa},X_\kappa\right)$ is the Kolmogorov distance between two multiple Wiener-It\^{o} integrals of the rank~$\kappa$. Using Lemma~\ref{dst} we get
	\[
	\rho\left(X_{r, \kappa},X_{\kappa} \right) \leq C\left[\ \int\limits_{\mathbb{R}^{\kappa d}}\frac{|K(\lambda _1+ \dots +\lambda _\kappa)|^2\left(Q_r(\lambda_1,\dots,\lambda_\kappa)-1\right)^2\mathrm{d}\lambda
		_1 \dots \,\mathrm{d}\lambda _\kappa}{\left\| \lambda _1\right\| ^{d-\alpha}\dots\left\| \lambda _\kappa\right\| ^{d-\alpha}}\right]^{\frac{a}{2+a}}.
	\]
	
	Using Lemma~\ref{finint} and the bound (\ref{adw}) instead of Lemma~2 and (2) in \cite{surfNew} respectively, the rest of the proof is analogous to Theorem~5 in \cite{surfNew}. \qedhere
	
\end{proof}

\section{Simulation studies}\label{sec4}

In this section we present some simulation studies to confirm the obtained results. We also investigate random fields on cube surfaces and demonstrate that the rate of convergence for cube surfaces is similar to the rate of convergence for spheres. These simulations support the hypothesis that the results of this paper should hold for more general surfaces.

The theoretical results in Sections~\ref{sec2} and \ref{sec3} were derived for integral functionals. However, computer simulations are possible only in discrete spaces. Thus, we simulate the functionals for sums instead of integrals. Recent results in \cite{OleAlo} show that additive functionals can be used to provide reliable approximations of the integral functionals.

All simulations were done by using parallel computing on the NCI's high-performance computer Raijin. The codes used for simulations and examples in this article are available in the folder "Research materials" from  \url{https://sites.google.com/site/olenkoandriy/}

We consider the Cauchy random field defined on the spheres $S(r),\, r>0$. The covariance function of this field is
$
\text{\rm{B}}(x) = (1 + \|x\|^2)^{-\frac{\alpha}{2}},$ $x\in S(r)$, where $\alpha$ is a long-range dependence parameter from Assumption~\ref{ass1}. By \cite[p.293]{Doh} the spectral density of this field is
\[
f_c(\|\lambda\|) = \frac{\|\lambda\|^{\frac{\alpha-d}{2}}K_{\frac{d-\alpha}{2}}(\|\lambda\|)}{\pi^{d/2}2^{\frac{\alpha-d}{2}}\Gamma\left(\frac{\alpha}{2}\right)} = c_2(d,\alpha )\|\lambda\|^{\alpha -d} \times \frac{2^{\frac{d+\alpha}{2}}}{\Gamma((d - \alpha)/2)}\|\lambda\|^{\frac{d-\alpha}{2}}K_{\frac{d-\alpha}{2}}(\|\lambda\|),
\]
where $K_{\frac{d-\alpha}{2}}(\cdot)$ is the modified Bessel function of the third kind.

By (10.30.2) in \cite{Nist} $K_{\nu}(z) \sim \frac{1}{2}\Gamma(\nu)(\frac{1}{2}z)^{-\nu},\, z\rightarrow 0$. Thus, one can see that \[L_c(\|\lambda\|):= \frac{2^{\frac{d+\alpha}{2}}}{\Gamma((d - \alpha)/2)}\|\lambda\|^{\frac{\alpha-d}{2}}K_{\frac{d-\alpha}{2}}\left(\frac{1}{\|\lambda\|}\right) 
\sim 2^{d-1},\, \|\lambda\| \rightarrow \infty,
\] 
is a slowly varying function that satisfies Assumption~\ref{ass2}. Hence, Theorem~\ref{th4} holds for the Cauchy random fields.

Random fields were simulated by using the R package \textrm{'RandomFields'}, see \cite{Schlather}. Figure~\ref{fig1} provides two examples of the simulated Cauchy random field that were obtained. 

\begin{figure}[h]
	\begin{minipage}{7cm}
		\includegraphics[width=0.8\linewidth,height=0.8\linewidth]{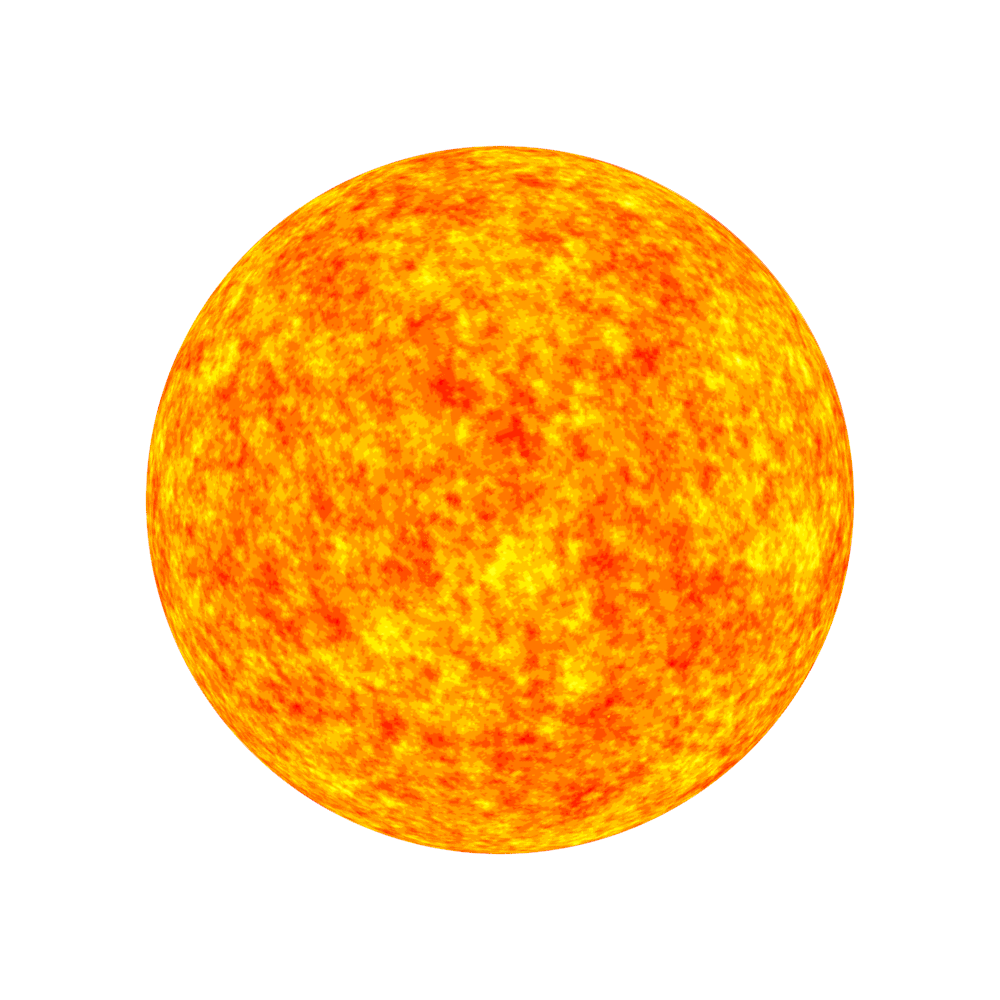}
	\end{minipage} \
	\begin{minipage}{7cm}
		\includegraphics[width=0.8\linewidth,height=0.8\linewidth]{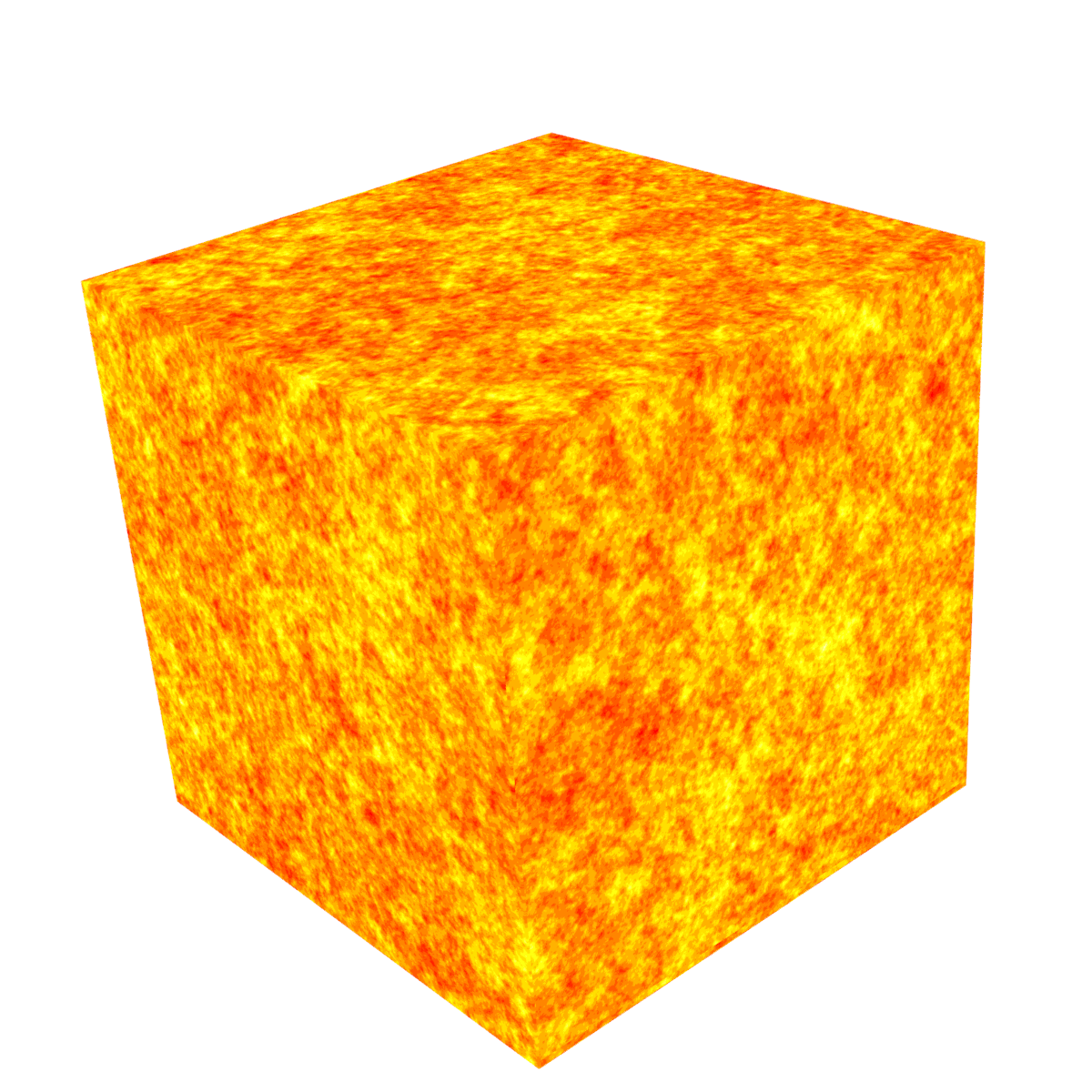}
	\end{minipage}
	\caption{Realizations of Cauchy random fields with $\alpha = \frac{2}{3}$ on the sphere and the surface of the cube.}
	\label{fig1}
\end{figure}

To verify the results of Sections~\ref{sec2} and \ref{sec3} we simulated the random variables 
\begin{equation}\label{vardisc}
X_{r, G} = \frac{\kappa!c_2^{-\kappa/2}(d,\alpha )}{C_\kappa r^{d-1-\frac{\kappa\alpha}{2}}L_c^{%
		\frac{\kappa}{2}}(r)}\frac{|S(r)|}{|P(r)|}\sum\limits_{p \in P(r)}G(\eta(p))h_{sp}\left(\frac{p}{\|p\|}\right),\, r \in [r_{min}, r_{max}],
\end{equation}
where $P(r) := \{p \in S(r)\}$ is a discrete set of uniformly distributed points in $S(r)$, $|S(\cdot)|$ is the surface area of the sphere, $|P(\cdot)|$ is the number of points in the set $P(\cdot)$, and $r_{min}$, $r_{max}$ denote the minimum and maximum  values of the considered radius.

For the limit random variable $X_\kappa$ we considered the random variable $X_{R, G}$, where $R>>r_{max}$.

Let $d = 3$, $\alpha = 2/3$, $G(y) = H_2(y) = y^2 -1$, $r_{min} = 200$, and $r_{max} = 3000$. Then the variables in (\ref{vardisc}) take the form
\begin{equation}\label{sph2}
X_{r, 2} = \frac{\pi^{3/2}\Gamma\left(\frac{1}{3}\right)}{2^{7/6}r^{1/{6}}K_{\frac{7}{6}}(1/r)}\frac{|S(r)|}{|P(r)|}\sum\limits_{p \in P(r)}((\eta(p))^2 - 1)h_{sp}\left(\frac{p}{\|p\|}\right),\, r \in [200, 3000].
\end{equation}

Let $h_{sp}^{s}(\theta, \phi) := 1.2 + 0.2\sin(5\theta)\sin(5\phi)$ and $h_{sp}^{c}(\theta, \phi) := 2 + \cos(3\theta)$, where $\theta$ and $\phi$ are spherical coordinates. These functions embed the sphere of radius 1 and smoothly vary in different directions. We present two examples where we used $h_{sp}^{s}(\cdot)$ as a weighted function in the case of spheres and $h_{sp}^{c}(\cdot)$ as a weighted function in the case of cubes. See Figure~\ref{fig2} for the visualization of these functions.

\begin{figure}[h]
	\begin{minipage}{7cm}
		\includegraphics[width=0.8\linewidth,height=0.8\linewidth]{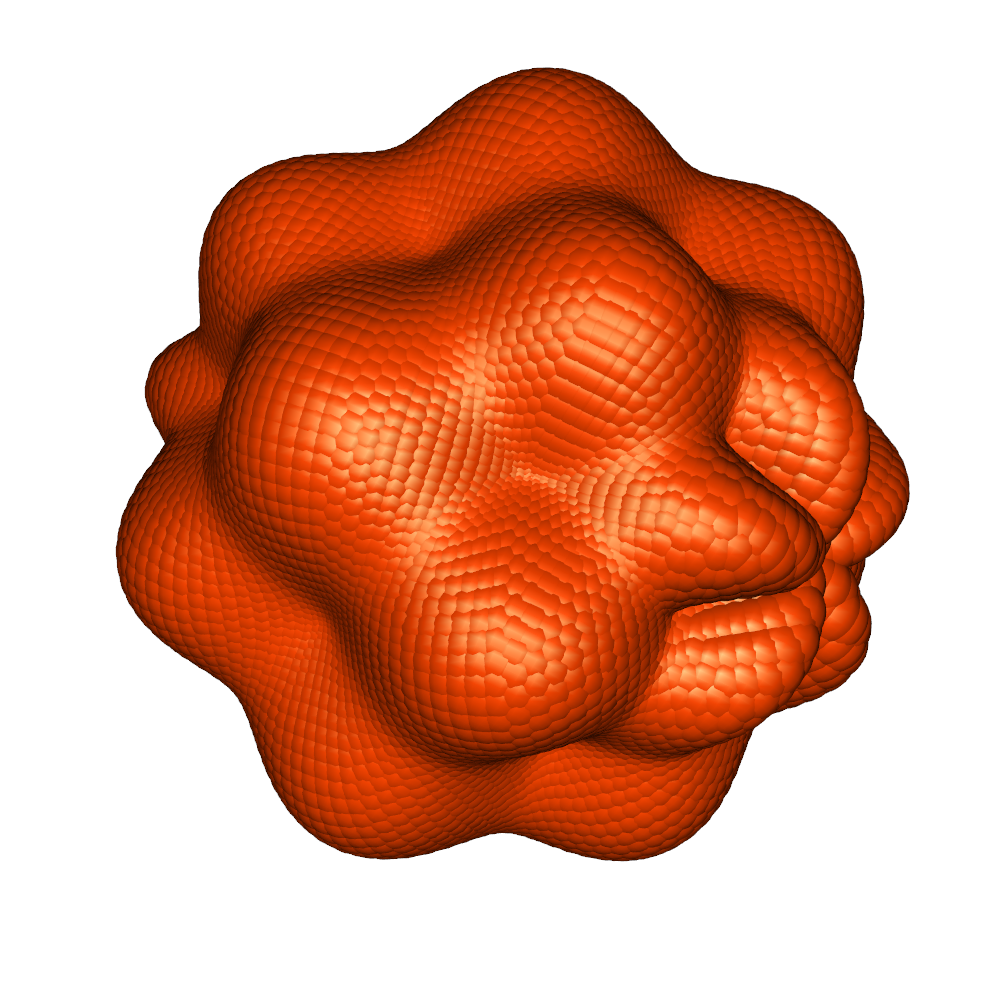}
	\end{minipage} \
	\begin{minipage}{7cm}
		\includegraphics[width=0.8\linewidth,height=0.8\linewidth]{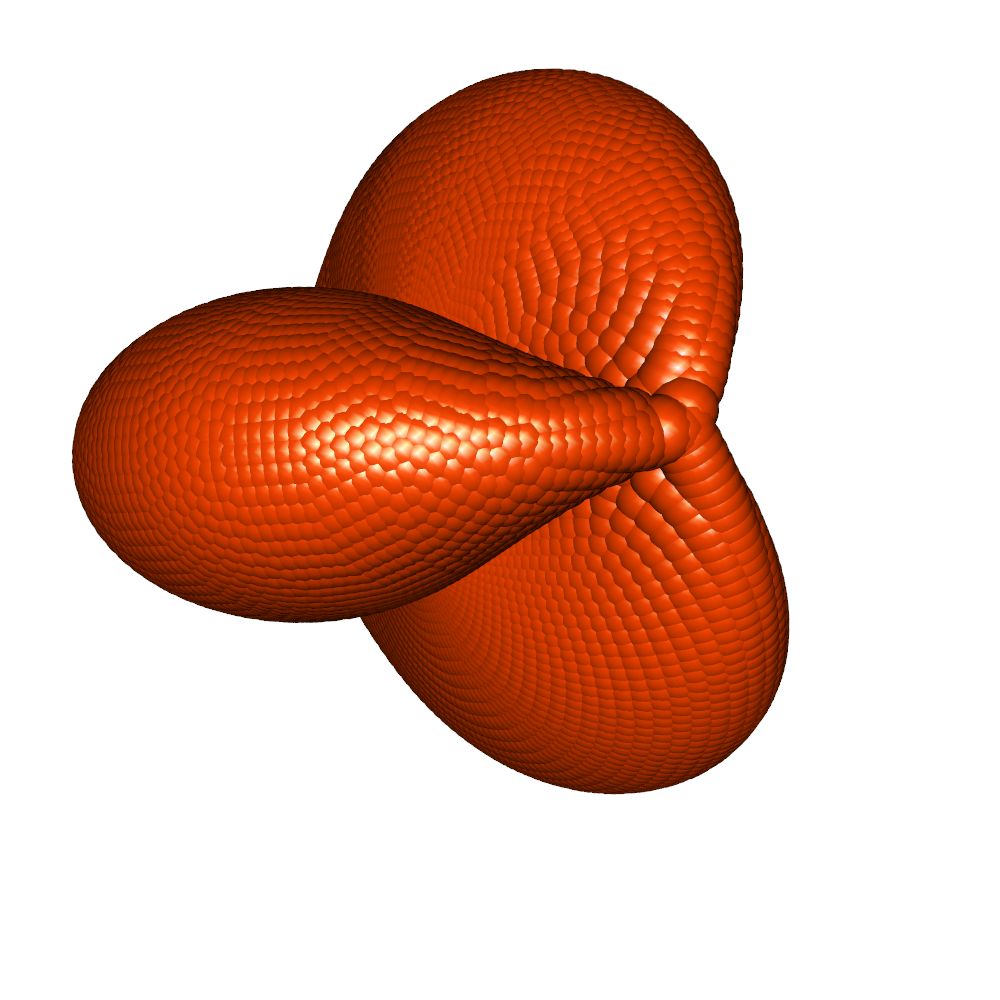}
	\end{minipage}
	\caption{Weighted functions $h_{sp}^{s}$ (left) and $h_{sp}^{c}$ (right) used in the simulations.}
	\label{fig2}
\end{figure}

The sets $P(r),\,  r >0$, were created by using the algorithm in \cite{Des} and setting the number of points $|P(r)| \sim r^2$. The random field $\eta(\cdot)$ was simulated at the points $P(r), r \in [200, 3000],$ and at the points $P(R)$. Each random variable (\ref{sph2}) was simulated 2000 times. Using the simulated values, the Kolmogorov distance between $X_{r, 2}$ and $X_{R, 2}$ was computed by the default Kolmogorov-Smirnov function \textrm{'ks.test'} in R. We present the results in two figures, obtained by repeating the simulation steps above 100 times. The first figure shows the box plots of the distances between $X_{r, 2}$ and $X_{R, 2}$. The second figure shows the corresponding box plots of the distance's logarithms. The first figure displays actual convergence of the Kolmogorov distance for increasing $r$. The second figure displays the changes in the power index of the rate of convergence.

For $R = 4000$ and $h_{sp}(\cdot) \equiv 1$ Figure~\ref{fig3} presents the obtained results for the unweighted case on spheres.

\begin{figure}[h]
	\begin{minipage}{7cm}
		\includegraphics[width=1\linewidth,height=0.8\linewidth]{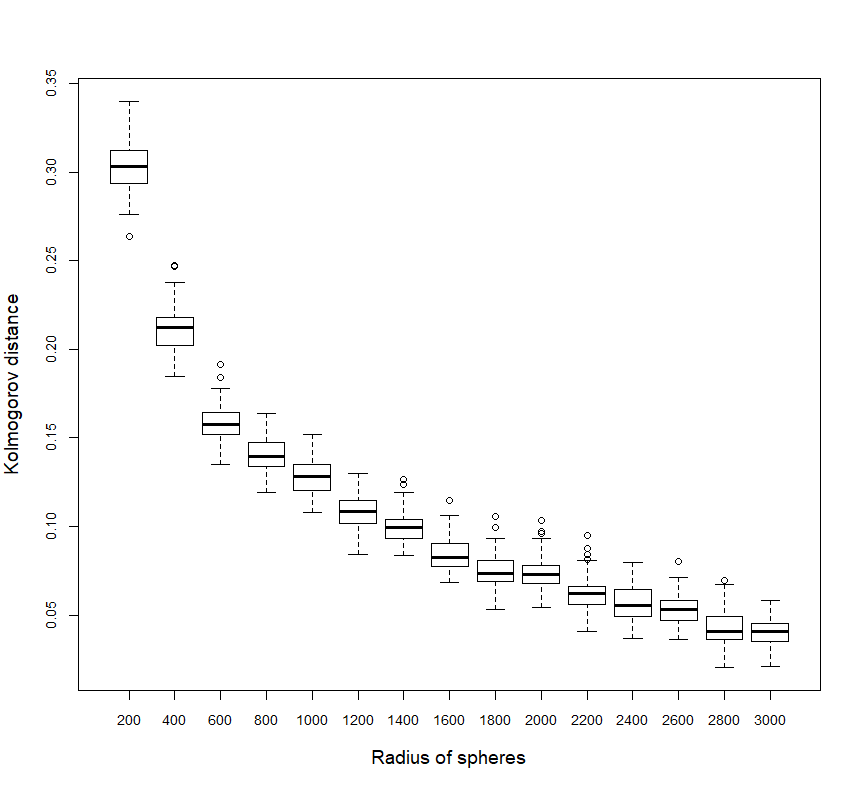}
	\end{minipage} \
	\begin{minipage}{7cm}
		\includegraphics[width=1\linewidth,height=0.8\linewidth]{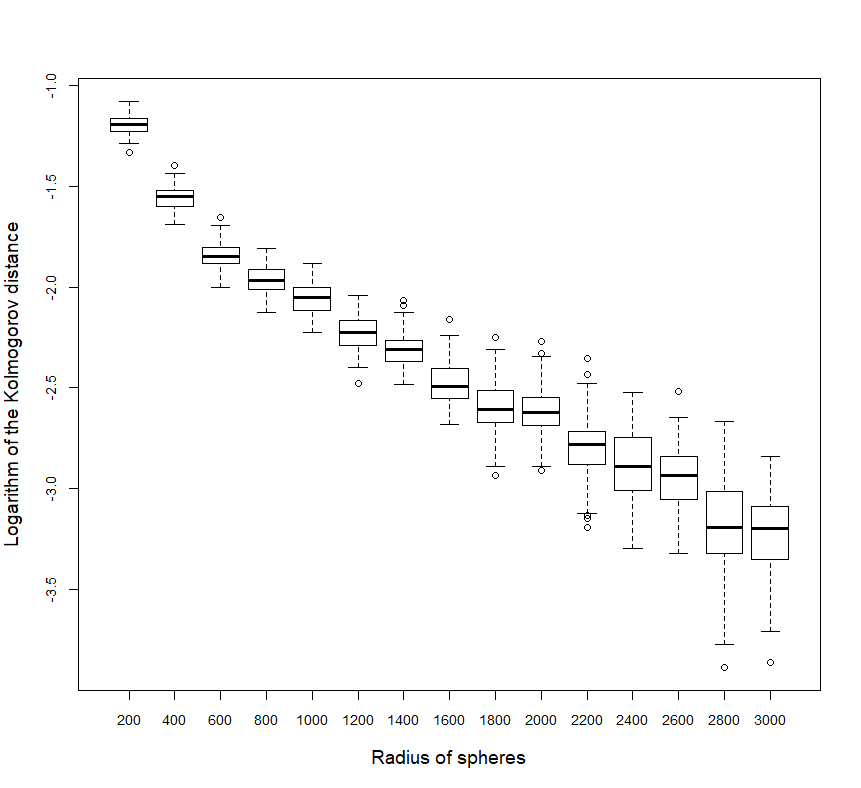}
	\end{minipage}
	\caption{Simulation results for spheres with $R = 4000$ and $h_{sp}(\cdot)\equiv 1$.}
	\label{fig3}
\end{figure}

In the weighted case $h_{sp}(\cdot) = h_{sp}^{s}(\cdot)$ the obtained results are presented in Figure~\ref{fig4}. 

\begin{figure}[h]
	\begin{minipage}{7cm}
		\includegraphics[width=1\linewidth,height=0.8\linewidth]{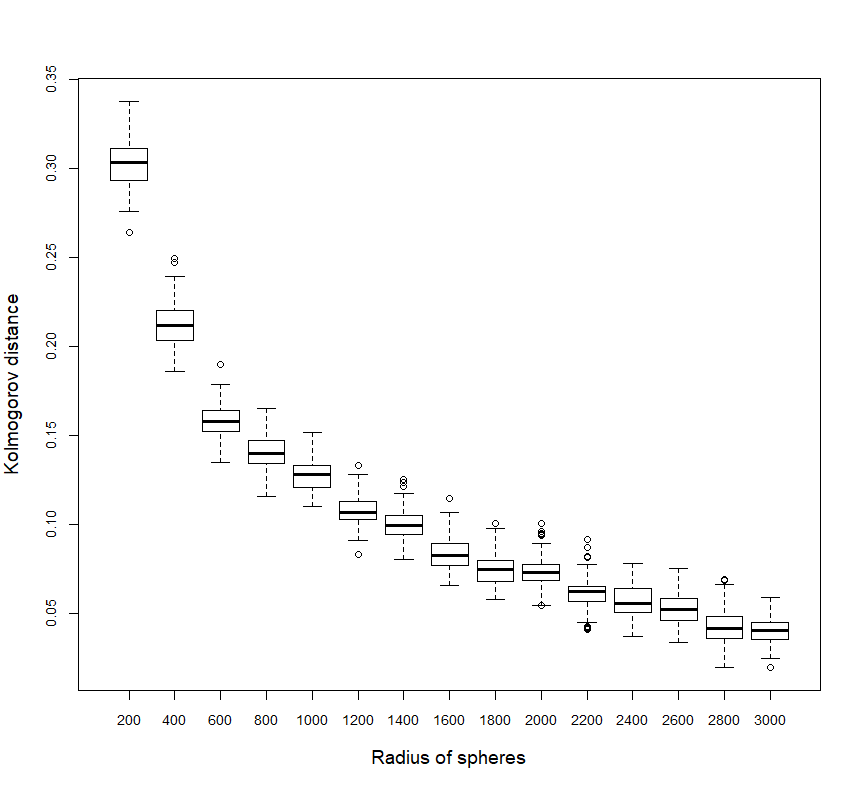}
	\end{minipage} \
	\begin{minipage}{7cm}
		\includegraphics[width=1\linewidth,height=0.8\linewidth]{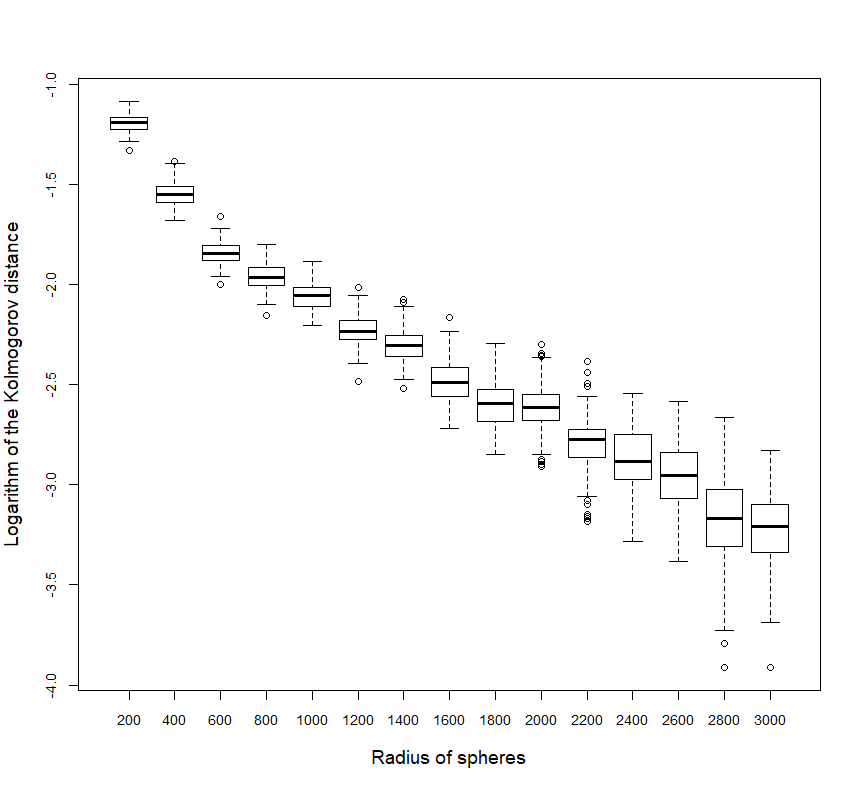}
	\end{minipage}
	\caption{Simulation results for spheres with $R = 4000$ and $h_{sp}(\cdot) = h_{sp}^{s}(\cdot)$.}
	\label{fig4}
\end{figure}

As one can see, Figures~\ref{fig3} and \ref{fig4} are quite similar which indicates that un-weighted and weighted functionals have the same rate of convergence. This observation agrees with Theorem~\ref{th4} according to which the weighted function has no effect on the rate of convergence. If the upper bound for the convergence rate in Theorem~\ref{th4} was sharp, one would expect to see the means of Kolmogorov distance's logarithms converge to a fixed value.  Instead, we can see that the means form a declining slope, which might suggest that the real rate of convergence is a power function of the radius $r$. By fitting the linear regression model we obtained the equation 
\begin{equation}\label{pwr}
\log(\rho\left(X_{r, 2},X_{R, 2}\right)) \approx -1.512 -0.000576r
\end{equation} in the unweighted case and a very similar equation in the weighted case.

Similar simulations  were also conducted for cube surfaces. We considered 3-dimensional cube surfaces $SQ(r)$ with the centre at the origin and the side lengths $2r,\, r>0$. Thus, in (\ref{sph2}) we replaced $S(r)$ by $SQ(r)$ and $P(r)$ by $PQ(r) := \{p \in SQ(r)\}$. Each $PQ(r)$ was set by creating the equidistant grid for each side of the cube and, therefore, $\left|PQ(r)\right| \sim r^2$. To approximate integrals (\ref{intf}) by sums (\ref{vardisc}) with the equal precision in both sphere and cube cases we created approximately $6/\pi$ times more points on cubes than on spheres of the same radius. Other steps remained unchanged.   

In the unweighted case $h_{sp}(\cdot) \equiv 1$ the obtained results for surfaces of cube are presented in Figure~\ref{fig5}.

\begin{figure}[h]
	\begin{minipage}{7cm}
		\includegraphics[width=1\linewidth,height=0.8\linewidth]{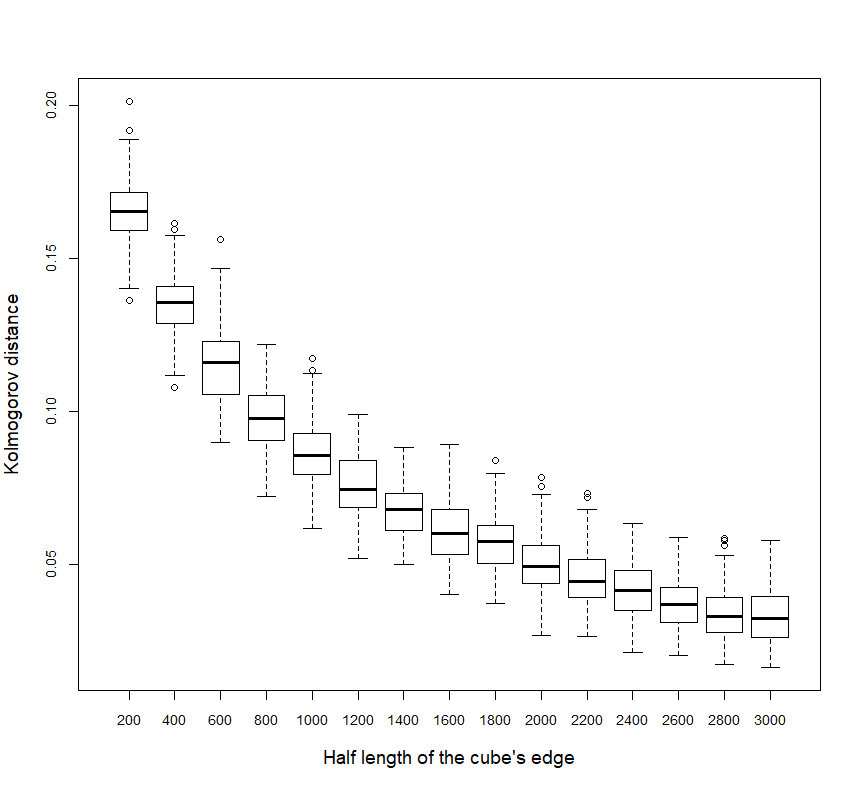}
	\end{minipage} \
	\begin{minipage}{7cm}
		\includegraphics[width=1\linewidth,height=0.8\linewidth]{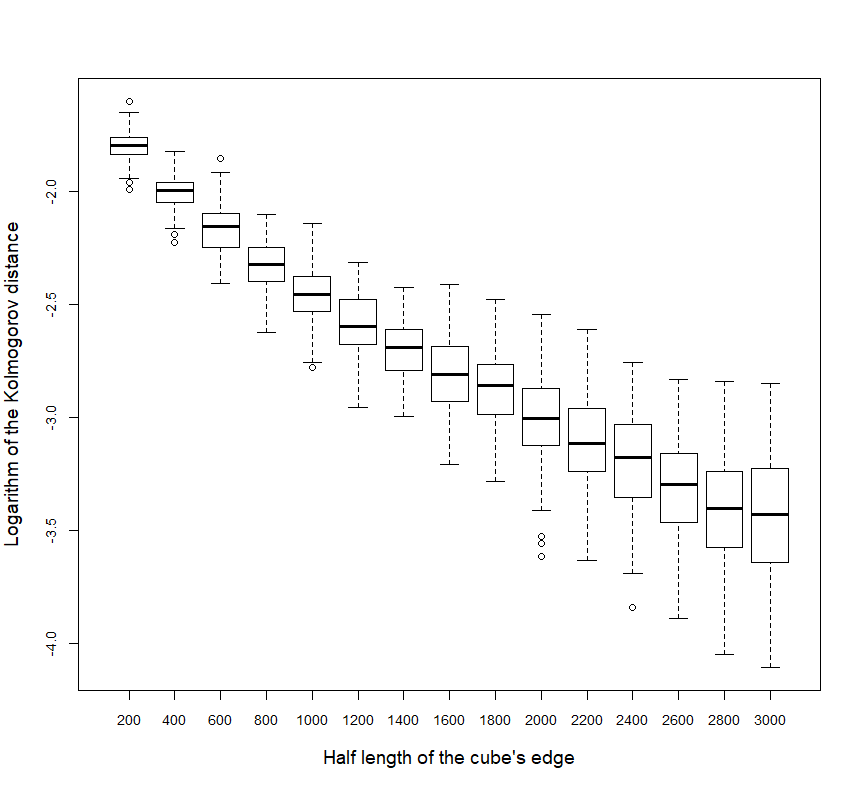}
	\end{minipage}
	\caption{Simulation results for cubes with $R = 4000$ and $h_{sp}(\cdot)\equiv 1$.}
	\label{fig5}
\end{figure}

In the weighted case $h_{sp}(\cdot) = h_{sp}^{c}(\cdot)$ we obtained the results presented in Figure~\ref{fig6}. 

\begin{figure}[h]
	\begin{minipage}{7cm}
		\includegraphics[width=1\linewidth,height=0.8\linewidth]{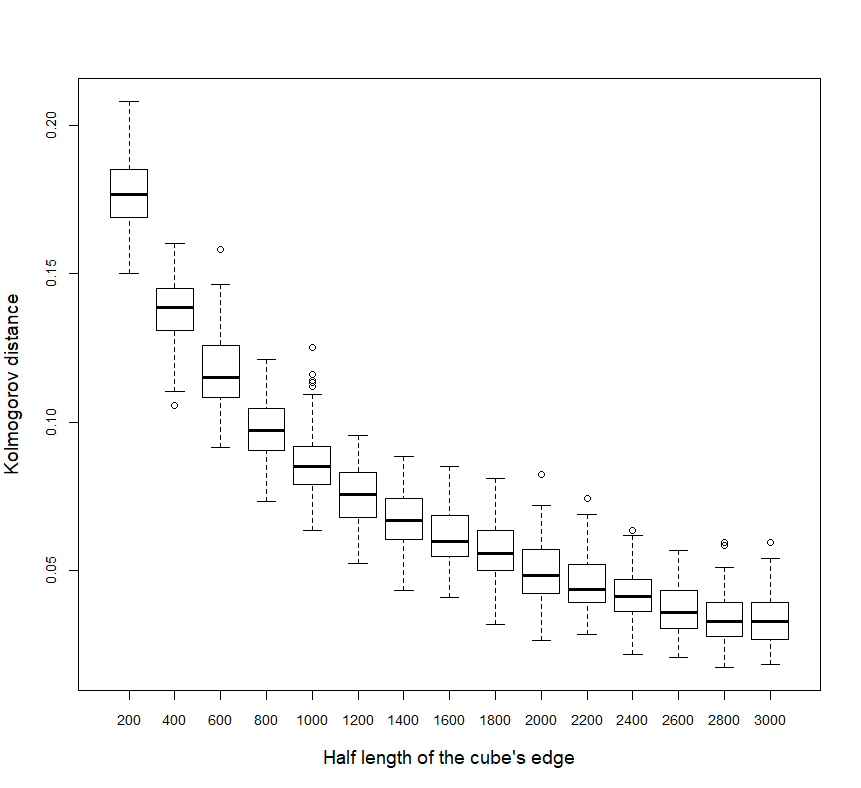}
	\end{minipage} \
	\begin{minipage}{7cm}
		\includegraphics[width=1\linewidth,height=0.8\linewidth]{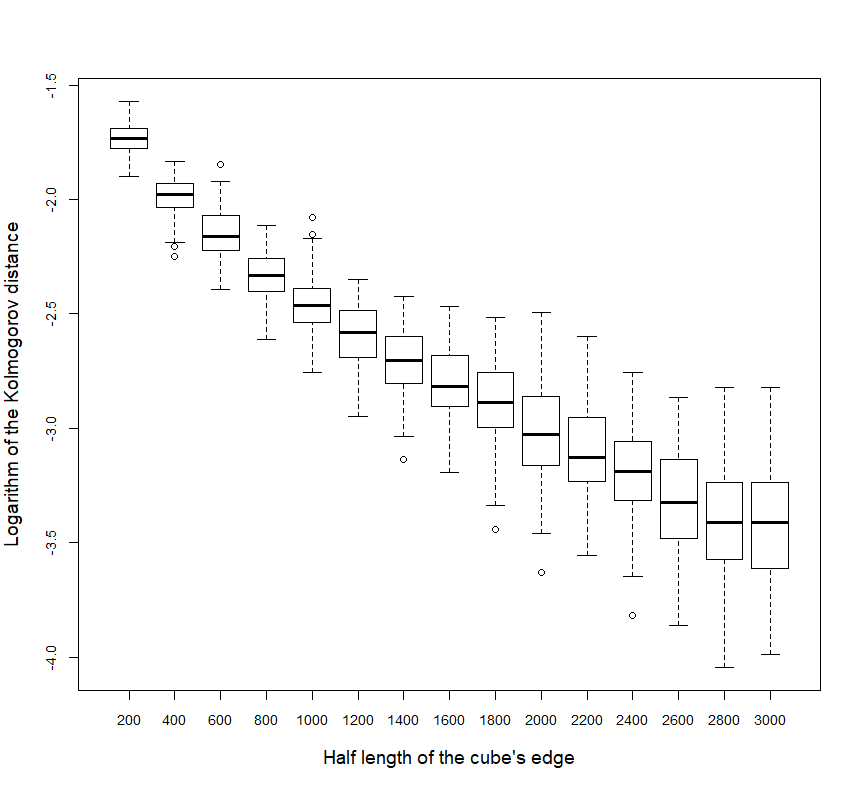}
	\end{minipage}
	\caption{Simulation results for cubes with $R = 4000$ and $h_{sp}(\cdot) = h_{sp}^{c}(\cdot)$.}
	\label{fig6}
\end{figure}

As one can see, in the case of cubes the rate of convergence is also similar in both weighted and unweighted cases. Furthermore, these figures also suggest a power rate of convergence. By fitting the linear regression lines we obtained the equation $\log(\rho\left(X_{r, 2},X_{R, 2}\right)) \approx -1.83 -0.000571r$ in the unweighted case and the equation $\log(\rho\left(X_{r, 2},X_{R, 2}\right)) \approx -1.81 -0.000584r$ in the weighted case. The slopes of these equations are very similar to the slope in (\ref{pwr}), indicating the same rate of convergence in the cases of both surfaces. 

\section{Directions for future research}\label{sec5}

This paper discussed the asymptotic behaviour of least squares estimators in regression models for long-range dependent random fields observed on spheres. The results were obtained under rather general assumptions on the random fields.

We expect that the results derived in Sections~\ref{sec2} and \ref{sec3} also hold in the case of random fields observed on more general surfaces. This conjecture is supported by the results in Section~\ref{sec4}, where asymptotic behaviour of LSE was studied for the cases of sphere and cube surfaces. The obtained results were quite similar in both cases.

The upper bound for the rate of convergence  presented in the paper is the first result of such type for LSE. It would be interesting to obtain sharp upper bounds and extremal examples. The results of Section~\ref{sec4} suggest that the actual speed of convergence of LSE might be a power function of $r$.

The linear regression model was considered in this paper. It would be interesting to obtain analogous results for non-linear regression models.

\section*{Acknowledgements}
This research was partially supported under the Australian Research Council's Discovery Project
DP160101366.\\ This research includes extensive simulation studies using the computational cluster Raijin of the National Computational Infrastructure (NCI), which is supported by the Australian Government and La Trobe University.

\bibliographystyle{tfnlm}
\bibliography{article_03_03}

\end{document}